\newcommand{\half}{\frac{1}{2}}
\begin{document} 
\newtheorem{prop}{Proposition}[section]
\newtheorem{Def}{Definition}[section] \newtheorem{theorem}{Theorem}[section]
\newtheorem{lemma}{Lemma}[section] \newtheorem{Cor}{Corollary}[section]

\title[ MKG in temporal gauge]{\bf Local well-posedness for the (n+1) - dimensional Maxwell-Klein-Gordon equations in temporal gauge}
\author[Hartmut Pecher]{
{\bf Hartmut Pecher}\\
Fakult\"at f\"ur Mathematik und Naturwissenschaften\\
Bergische Universit\"at Wuppertal\\
Gau{\ss}str.  20\\
42119 Wuppertal\\
Germany\\
e-mail {\tt pecher@math.uni-wuppertal.de}}
\date{}

\begin{abstract}
This is an extension of the paper [14] by the author for the 2+1 dimensional Maxwell-Klein-Gordon equations in temporal gauge to the n+1 dimensional situation for $n \ge 3$. They are shown to be locally well-posed for low regularity data, in 3+1 dimensions even below energy level improving a result by Yuan. Fundamental for the proof is a partial null structure of the nonlinearity which allows to rely on bilinear estimates in wave-Sobolev spaces, in 3+1 dimensions proven by d'Ancona, Foschi and Selberg, on an $(L^{\frac{2(n+1)}{n-1}}_x L^2_t)$ - estimate for the solution of the wave equation, and on the proof of a related result for the Yang-Mills equations by Tao.

\end{abstract}
\maketitle
\renewcommand{\thefootnote}{\fnsymbol{footnote}}
\footnotetext{\hspace{-1.5em}{\it 2010 Mathematics Subject Classification:} 
35Q40, 35L70 \\
{\it Key words and phrases:} Maxwell-Klein-Gordon, 
local well-posedness, temporal gauge}
\normalsize 
\setcounter{section}{0}
\section{Introduction and main results}

\noindent Consider the Maxwell-Klein-Gordon equations
\begin{align}
\label{1.1}
\partial^{\alpha} F_{\alpha \beta} & = -Im(\phi \overline{D_{\beta} \phi}) \\
\label{1.2}
D^{\mu}D_{\mu} \phi &= m^2 \phi
\end{align}
in Minkowski space $\mathbb{R}^{1+n} = \mathbb{R}_t \times \mathbb{R}^n_x$ with metric $diag(-1, ...,1)$. Greek indices run over $\{0,1,...,n\}$, Latin indices over $\{1,...,n\}$, and the usual summation convention is used.  Here $m \in \mathbb{R}$ and
$$ \phi: \mathbb{R}\times \mathbb{R}^n \to \mathbb{C} \, , \, A_{\alpha}: \mathbb{R} \times \mathbb{R}^n \to \mathbb{R} \, , \, F_{\alpha \beta} = \partial_{\alpha} A_{\beta} - \partial_{\beta} A_{\alpha} \, , \, D_{\mu} = \partial_{\mu} + iA_{\mu} \, . $$
$A_{\mu}$ are the gauge potentials, $F_{\mu \nu}$ is the curvature. We use the notation $\partial_{\mu} = \frac{\partial}{\partial x_{\mu}}$, where we write $(x^0,x^1,...,x^n)=(t,x^1,...,x^n)$ and also $\partial_0 = \partial_t$.

Setting $\beta =0$ in (\ref{1.1}) we obtain the Gauss-law constraint
\begin{equation}
\label{1.3}
\partial^j F_{j 0} = -Im(\phi \overline{D_0 \phi})  \, .
\end{equation}
The system (\ref{1.1}),(\ref{1.2}) is invariant under the gauge transformations
$$A_{\mu} \to A'_{\mu} = A_{\mu} + \partial_{\mu} \chi \, , \, \phi \to \phi' = e^{i \chi} \phi \, , \, D_{\mu} \to D'_{\mu} = \partial_{\mu} + i A'_{\mu} \, .$$
This allows to impose an additional gauge condition. We exclusively consider the temporal gauge
\begin{equation}
\label{1.5}
A_0 = 0 \, .
\end{equation}
In this gauge the system (\ref{1.1}),(\ref{1.2}) is equivalent to
\begin{align}
\label{1.6}
\partial_t \partial^j A_j &=  Im(\phi \overline{\partial_t \phi}) \\
\label{1.6'}
\Box A_j &= \partial_j(\partial^k A_k) - Im(\phi \overline{\partial_j \phi}) +  A_j |\phi|^2 \\
\label{1.6''}
\Box \phi &= -i (\partial^k A_k) \phi - 2i A^k \partial_k \phi + A^k A_k \phi  + m^2 \phi\, ,
\end{align}
where $\Box = -\partial_t^2 + \Delta$ is the d'Alembert operator. 

Other choices of the gauge are the Coulomb gauge $\partial^j A_j =0$ and the Lorenz gauge $\partial^{\mu} A_{\mu} = 0$. 

The classical (3+1)-dimensional Maxwell-Klein-Gordon system has been studied by Klainerman and Machedon \cite{KM} where the existence of global solutions for data in energy space and above in Coulomb gauge was shown. Uniqueness in a suitable subspace was also shown. For the temporal gauge they also showed a similar result by using a suitable gauge transformation applied to the solution constructed in Coulomb gauge. They made use of a null structure for the main bilinear term to achieve this result.  Local well-posedness in Coulomb gauge for data for $\phi$ in the Sobolev space $H^s$ and for $A$ in $H^r$ with $r=s > 1/2$, i.e., almost down to the critical space with repect to scaling, was shown by Machedon and Sterbenz \cite{MS}. Global well-posedness below energy space (for $r=s > \sqrt{3}/2$) in Coulomb gauge was shown by Keel, Roy and Tao \cite{KRT}.

The problem in Lorenz gauge was considered by Selberg and Tesfahun \cite{ST}, who detected a null structure also in this case, and proved global well-posedness in energy space, especially also unconditional uniqueness in this space. The author \cite{P} proved local well-posedness for $s=\frac{3}{4}+\epsilon$ and $r=\frac{1}{2}+\epsilon$.

The problem in temporal gauge was treated by Yuan \cite{Y} directly in 
$X^{s,b}$-spaces. He stated local well-posedness in $X^{s,b}$-spaces for large data for $\phi$ in  $H^s$ and for $A$ in $H^r$ with $r=s > 3/4 $, where he just referred to the estimates given for Tao's small data local well-posedness results \cite{T1} in the Yang-Mills case. As a consequence  he
proved existence of a global solution in energy space and also uniqueness in subspaces of $X^{s,b}$-type. Unconditional uniqueness in the natural solution space in the finite energy case was shown by the author \cite{P1}. These results in temporal gauge rely on a similar result by Tao \cite{T1} for the Yang-Mills equations and small data. 

All these results were given in the (3+1)-dimensional case.

In 2+1 dimensions local well-posedness in Lorenz gauge for $s=\frac{3}{4}+\epsilon$ and $r=\frac{1}{4}+\epsilon$ was shown by the author \cite{P}. In Coulomb gauge local well-posedness for $s=r=\frac{1}{2}+\epsilon$ and also for $s=\frac{5}{8}+\epsilon$ , $r=\frac{1}{4}+\epsilon$ was obtained by Czubak and Pikula \cite{CP}, which was slightly improved to the case  $s=\frac{1}{2}+\epsilon$ , $r=\frac{1}{4}+\epsilon$ in \cite{P3}. In the temporal gauge in\cite{P3} local well-posedness was shown for data under the minimal smoothness assumption $s=r=\frac{1}{2}+\frac{1}{12}+\epsilon$ .
 
In the present paper we consider the (n+1)-dimensional case in the temporal gauge and lower down the minimal regularity assumptions on the data further using similar methods as in the (2+1)-dimensional case in \cite{P3}. We prove local well-posedness for data for $\phi$ in $H^s$ and $A$ in $H^r$, where $ s > \frac{n}{2} - \frac{3}{4}$ and $ r > \frac{n}{2} - 1$, where uniqueness holds in $X^{s,b}$ spaces (for more precise assumptions cf. Theorem \ref{Theorem'} ). The critical case with respect to scaling is $r=s=\frac{n}{2}-1$ , which we almost reach with respect to $r$. For technical reasons it is necessary to assume in a first step that the curl-free part of $A(0)$ vanishes (cf. Proposition \ref{Theorem}). This condition is removed by a suitable gauge transformation afterwards, which preserves  the regularity of the solution.
We need the null structure of some of the nonlinearities, the bilinear estimates for wave-Sobolev spaces $X^{s,b}_{|\tau|=|\xi|}$ , which were formulated by d'Ancona, Foschi and Selberg \cite{AFS} in arbitrary dimensions and proven in the case $n \le 3$, a generalization of a special casde to higher dimensions by \cite{P4}, and Tao's hybrid estimates \cite{T1} for the product of functions in wave-Sobolev spaces $X^{s,b}_{|\tau|=|\xi|}$ and in product Sobolev spaces $X^{l,b}_{\tau = 0}$ (cf. the definition of the spaces below) which have to be generalized from the special case $n=3$ and $l=s+\frac{1}{4}$. Moreover we need an appropriate generalization of the estimates for the terms which fulfill a null condition. Of fundamental importance is an $(L^{\frac{2(n+1)}{n-1}}_x L^2_t)$ - estimate for the solution of the wave equation which goes back to Tataru \cite{KMBT} and Tao \cite{T1}.
        
We denote both the Fourier transform with respect to space and time and with respect to space by $\,\widehat{\cdot}\,\,$ or ${\mathcal F}$. The operator
$D^{\alpha}$ is defined by $({\mathcal F}(D^{\alpha} f))(\xi) = |\xi|^{\alpha} ({\mathcal F}f)(\xi)$ and similarly $ \Lambda^{\alpha}$ by $({\mathcal F}(\Lambda^{\alpha} f))(\xi) = \langle \xi \rangle^{\alpha} ({\mathcal F}f)(\xi)$ , where we define $\langle \,\cdot\, \rangle := (1+|\cdot|^2)^{\frac{1}{2}}$ . The inhomogeneous Sobolev spaces are denoted by $H^{s,p}$. For $p=2$ we simply denote them by $H^s$. We repeatedly use the Sobolev embeddings $H^{s,p} \hookrightarrow L^q$ for $\frac{1}{p} \ge \frac{1}{q} \ge \frac{1}{p}-\frac{s}{n}$ and $1<p\le q < \infty$ . We also use the notation
$a \pm := a \pm \epsilon$ for a sufficiently small $\, \epsilon >0$ .

The standard space $X^{s,b}_{\pm}$ of Bourgain-Klainerman-Machedon type (which were already considered by M. Beals \cite{B}) belonging to the half waves is the completion of the Schwarz space  $\mathcal{S}({\mathbb R}^{n+1})$ with respect to the norm
$$ \|u\|_{X^{s,b}_{\pm}} = \| \langle \xi \rangle^s \langle  \tau \pm |\xi| \rangle^b \widehat{u}(\tau,\xi) \|_{L^2_{\tau \xi}} \, . $$ 
The wave-Sobolev space $H^{s,b}$  is the completion of the Schwarz space  $\mathcal{S}({\mathbb R}^{n+1})$ with respect to the norm
$$ \|u\|_{H^{s,b}} = \| \langle \xi \rangle^s \langle  |\tau| - |\xi| \rangle^b \widehat{u}(\tau,\xi) \|_{L^2_{\tau \xi}}  $$
and also $X^{s,b}_{\tau =0}$ with norm 
$$\|u\|_{X^{s,b}_{\tau=0}} = \| \langle \xi \rangle^s \langle  \tau  \rangle^b \widehat{u}(\tau,\xi) \|_{L^2_{\tau \xi}} \, .$$

We also define $X^{s,b}_{\pm}[0,T]$ as the space of the restrictions of functions in $X^{s,b}_{\pm}$ to $[0,T] \times \mathbb{R}^n$ and similarly $H^{s,b}[0,T]$ and $X^{s,b}_{\tau =0}[0,T]$ . We frequently use the estimate $\|u\|_{X^{s,b}_{\pm}} \le \|u\|_{H^{s,b}}$ for $b \le 0$ and the reverse estimate for $b \ge 0$. This allows to replace the spaces $X^{s,b}_{\pm}$ by $H^{s,b}$ in the nonlinear estimates.

We decompose $A=(A_1,...,A_n)$ into its divergence-free part $A^{df}$ and its curl-free part $A^{cf}$ :
\begin{equation}
\label{1.9}
A = A^{df} + A^{cf} \, ,
\end{equation}
where
\begin{equation}
\label{1.10}
 A_j^{df} = R^k(R_j A_k - R_k A_j) \quad , \quad A_j^{cf} = - R_j R_k A^j \, ,
 \end{equation}
and $R_k := D^{-1} \partial_k$ are the Riesz operators.
Let $PA := A^{df}$ denote the projection operator onto the divergence free part. Then we obtain the equivalent system
\begin{align}
\label{1.11}
\partial_t A^{cf} &= - D^{-2} \nabla Im(\phi \overline{\partial_t \phi}) \\
\label{1.12}
\Box A^{df} & = -P( Im(\phi \overline{ \nabla \phi}) + iA |\phi|^2) \\
\label{1.13}
\Box \phi & = i(\partial^j A_j^{cf}) \phi + 2i A^{df}_j \partial^j \phi +2i A_j^{cf} \partial^j \phi + A^j A_j \phi \, ,
\end{align}
where $A$ is replaced by (\ref{1.9}).

Klainerman and Machedon detected that $A^{df} \cdot \nabla \phi$ and $P(Im(\phi \overline{\nabla \phi})_k)$ are null forms. An elementary calculation namely shows that
\begin{align}
\label{1.13'}
 2A^{df}_i \partial^i \phi & = Q_{ij}(\phi,|\nabla|^{-1}(R^i A^j- R^j A^i)) \end{align}
 and
 \begin{align}
\label{1.12'}
P(Im(\phi \overline{\nabla \phi})_k) & = -2 R^j |\nabla|^{-1} Q_{kj}(Re \phi, Im \phi) 
\end{align}
where the null form $Q_{ij}$ is defined by
$$ Q_{ij}(u,v):= \partial_i u \partial_j v - \partial_j u \partial_i v \, .$$

Defining
\begin{align*}
\phi_{\pm} = \frac{1}{2}(\phi \pm i \Lambda^{-1} \partial_t \phi)&
 \Longleftrightarrow \phi=\phi_+ + \phi_- \, , \, \partial_t \phi = i \Lambda (\phi_+ - \phi_-) \\
 A^{df}_{\pm} = \frac{1}{2}(A^{df} \pm i \Lambda^{-1} \partial_t A^{df}) & \Longleftrightarrow A^{df} = A^{df}_+ + A_-^{df} \, , \, \partial_t A^{df} = i \Lambda(A^{df}_+ - A^{df}_-)
 \end{align*}
 we can rewrite (\ref{1.11}),(\ref{1.12}),(\ref{1.13}) as
 \begin{align}
 \label{1.11*}
 \partial_t A^{cf} &= - D^{-2} \nabla Im(\phi \overline{\partial_t \phi}) \\
 \label{1.12*}
(-i \partial_t \pm \Lambda)A_{j \pm} ^{df} & = \mp 2^{-1} \Lambda^{-1} ( 2R^j D^{-1} Q_{kj}(Re \phi, Im \phi) +iA_j |\phi|^2 - A_j^{df}) \\
\nonumber
(-i \partial_t \pm \Lambda) \phi_{\pm} &= \mp 2^{-1} \Lambda^{-1}( i(\partial^j A_j^{cf})\phi +  iQ_{kj}(\phi,|\nabla|^{-1}(R^k A^j- R^j A^k)) \\ \label{1.13*}
& \quad+2i A_j^{cf} \partial^j \phi + A^j A_j \phi- \phi) \, .
\end{align}
The initial data are transformed as follows:
\begin{align}
\label{1.14*}
\phi_{\pm}(0) &= \frac{1}{2}(\phi(0) \pm i^{-1} \Lambda^{-1} (\partial_t \phi)(0)) \\
\label{1.15*}
A^{df}_\pm(0) & = \frac{1}{2}(A^{df}(0) \pm i^{-1} \Lambda^{-1} (\partial_t A^{df})(0)) \, .
\end{align}

Our main result is preferably formulated in terms of the system (\ref{1.6}),(\ref{1.6'}),(\ref{1.6''}).

\begin{theorem}
\label{Theorem'}
Let $ n \ge 3$ . \\
1. Assume $\frac{n}{2} - \half \ge r> \frac{n}{2}-1$ , $s> \frac{n}{2} -\frac{3}{4}$ , $2r-s > \frac{n}{2}-\frac{3}{2} $ , $ r \ge s-1$ , $ l > \frac{n-1}{2} $, $l \le 1+s$ , $ l < 2s-\frac{n}{2}+1$ . Let $\phi_0 \in H^s({\mathbb R}^n),$  $\phi_1 \in H^{s-1}({\mathbb R}^n),$  
$a_0 \in H^r({\mathbb R}^n)$ , $a_1 \in H^{r-1}({\mathbb R}^n)\,$ be given,
which satisfy the compatibility condition
\begin{equation}
\label{CC}
\partial_j a^j_1 = Im(\phi_0 \overline{\phi}_1) 
\end{equation}
Then there exists $T>0$, such that (\ref{1.6}),(\ref{1.6'}),(\ref{1.6''}) with initial conditions
$ \phi(0)= \phi_0$ , $(\partial_t \phi)(0) = \phi_1$ , $A(0) = a_0$ , $(\partial_t A)(0)= a_1$ 
has a unique local solution
$$ \phi= \phi_++ \phi_- \quad , \quad A=A_+ + A_- + \tilde{A}$$
with 
$$ \phi_{\pm} \in X^{s,\frac{1}{2}+\epsilon}_{\pm}[0,T] \, , \, A_{\pm} \in X^{r,\frac{n}{2}-r+\epsilon}_{\pm}[0,T] \, , \,  \tilde{A} \in X^{l,\frac{1}{2}+\epsilon-}_{\tau =0}[0,T] \, ,$$
where $\epsilon >0$ is sufficiently small.\\
2. This solution satisfies
$$\phi \in C^0([0,T],H^s({\mathbb R}^n)) \cap C^1([0,T],H^{s-1}({\mathbb R}^n))\, , $$
$$ A \in C^0([0,T],H^r({\mathbb R}^n))\cap C^1([0,T],H^{r-1}({\mathbb R}^n)) \, . $$
\end{theorem}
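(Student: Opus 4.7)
The plan is a two-step reduction. First I would prove the analogous conclusion for the equivalent system (\ref{1.11*})--(\ref{1.13*}) under the additional hypothesis $A^{cf}(0)=0$, so that the curl-free component $\tilde A := A^{cf}$ starts from vanishing data and can be recovered by integrating the transport-type equation (\ref{1.11*}) in time; this is the proposition alluded to in the introduction. The general statement then follows by applying a gauge transformation $\chi$ whose spatial gradient removes the curl-free part of $a_0$ and whose time derivative absorbs the $A_0$ component; since $a_0^{cf}\in H^r$ the resulting $\chi$ is regular enough that the function spaces in the conclusion are preserved under $A_\mu\mapsto A_\mu+\partial_\mu\chi$, $\phi\mapsto e^{i\chi}\phi$.

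For the reduced problem the approach is a Picard iteration in the product space
\[
\phi_\pm \in X^{s,\frac12+\epsilon}_\pm[0,T],\qquad A^{df}_\pm \in X^{r,\frac{n}{2}-r+\epsilon}_\pm[0,T],\qquad \tilde A \in X^{l,\frac12+\epsilon-}_{\tau=0}[0,T].
\]
The unusual choice $b=\frac{n}{2}-r+\epsilon$, which is less than $\frac12$ in the stated regime for $r$, is Tao's device from \cite{T1}: it is essentially the largest $b$ for which the Tataru--Tao embedding $X^{r,b}_\pm \hookrightarrow L^{2(n+1)/(n-1)}_x L^2_t$ still holds at the available regularity, and this Strichartz-type bound is indispensable for the cubic nonlinearities. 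The space $X^{l,\frac12+\epsilon-}_{\tau=0}$ reflects that (\ref{1.11*}) is a transport equation rather than a wave equation, so that the characteristic variety is $\{\tau=0\}$.

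The iteration reduces to multilinear bounds on the right-hand sides of (\ref{1.11*})--(\ref{1.13*}), which fall into four groups. First, the two null forms $R^j D^{-1}Q_{kj}(\mathrm{Re}\,\phi,\mathrm{Im}\,\phi)$ and $Q_{kj}(\phi,|\nabla|^{-1}(R^k A^j - R^j A^k))$ are handled by the generalization of the d'Ancona--Foschi--Selberg wave-Sobolev bilinear estimates \cite{AFS} together with the angle gain intrinsic to a null form, which exactly compensates for the derivative in $\nabla\phi$. Second, the non-null quadratic term $D^{-2}\nabla\,\mathrm{Im}(\phi\overline{\partial_t\phi})$ in (\ref{1.11*}) is a wave--wave product to be placed in $X^{l,b}_{\tau=0}$; by duality this reduces to a wave-Sobolev product estimate, and the assumptions $l\le 1+s$ and $l<2s-\frac{n}{2}+1$ are precisely the scaling and low--high conditions this imposes. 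Third, the mixed terms $(\partial^j\tilde A_j)\phi$ and $\tilde A_j\,\partial^j\phi$ are transport $\times$ wave $\to$ wave products, controlled by Tao's hybrid estimates for the product of an $X^{l,b}_{\tau=0}$-function with an $H^{s,b}$-function \cite{T1}, generalized from $n=3$ with $l=s+\frac14$ to arbitrary $n\ge 3$ and general $l>\frac{n-1}{2}$. Fourth, the cubic terms $A^j A_j\phi$ and $A_j|\phi|^2$ are reduced by a Leibniz/paraproduct decomposition to iterated bilinear estimates in which at least one factor is placed in $L^{2(n+1)/(n-1)}_x L^2_t$ via the Tataru--Tao Strichartz bound and the remaining factors in appropriate $L^q_x L^\infty_t$ spaces by Sobolev embedding.

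The main obstacle I expect is the hybrid estimate for products $\tilde A\cdot\phi$ and $(\nabla\tilde A)\cdot\phi$: in \cite{T1} it is stated for $n=3$ and for the single value $l=s+\frac14$ tailored to Yang--Mills, whereas here $l$ only has to lie in the window $\frac{n-1}{2}<l\le 1+s$, $l<2s-\frac{n}{2}+1$ for every $n\ge 3$. Verifying that Tao's frequency-localized argument rebalances across this wider parameter range, in particular that high--high interactions with low-frequency output still fit into the target space $X^{s,\frac12+\epsilon}_\pm$, is the principal technical step; the other three groups of estimates are comparatively routine once the wave-Sobolev machinery is in place. Each multilinear bound carries a positive power of $T$ from the $\epsilon$-slack in $b$, so contraction for small $T$ is standard, and the continuity-in-time statement of part 2 follows from $X^{s,b}\hookrightarrow C^0 H^s$ for $b>\frac12$ applied to $\phi_\pm$ and $A^{df}_\pm$, together with the analogous embedding for $\tilde A$ in $X^{l,\frac12+}_{\tau=0}$.
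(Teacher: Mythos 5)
Your proposal is correct and follows essentially the same route as the paper: reduction to the case $a_0^{cf}=0$ solved by contraction in exactly these spaces (with $b=\frac{n}{2}-r+\epsilon$ for $A^{df}_\pm$), the same grouping of the nonlinear estimates (null forms via Klainerman--Machedon, the $\phi\overline{\partial_t\phi}$ term, Tao's hybrid estimates for the $A^{cf}\cdot\nabla\phi$ interactions, and the cubic terms), and removal of the assumption $a_0^{cf}=0$ by the time-independent gauge transformation $\chi=-(-\Delta)^{-1}\mathrm{div}\,a_0$. You also correctly single out the generalization of Tao's hybrid estimate to general $n$ and general $l>\frac{n-1}{2}$ as the main technical point, which is indeed where the paper invests its averaging-principle/Schur-test argument.
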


\section{Basic tools}
Fundamental for us are the following estimates. We frequently use the classical Sobolev multiplication law in dimension $n$ :
\begin{equation}
\label{SML}
\|uv\|_{H^{-s_0}} \lesssim \|u\|_{H^{s_1}} \|v\|_{H^{s_2}} \, ,
\end{equation}
if $s_0 + s_1+s_2 \ge \frac{n}{2}$ and $s_0+s_1+s_2 \ge \max(s_0,s_1,s_2)$ , where at most one of these inequalities is an equality.

The corresponding  bilinear estimates in wave-Sobolev spaces were formulated in arbitrary dimension $n \ge 2$ and proven by d'Ancona, Foschi and Selberg in the case $n=3$ in \cite{AFS} and also proven in the case $n=2$ in \cite{AFS1} in a form which includes some more limit cases which we do not need.
\begin{prop}
\label{Prop.2}
For $s_0,s_1,s_2,b_0,b_1,b_2 \in {\mathbb R}$ and $u,v \in   {\mathcal S} ({\mathbb R}^{n+1})$ the estimate
$$\|uv\|_{H^{-s_0,-b_0}} \lesssim \|u\|_{H^{s_1,b_1}} \|v\|_{H^{s_2,b_2}} $$ 
holds, provided the following conditions are satisfied:
\begin{align*}
\nonumber
& b_0 + b_1 + b_2 > \frac{1}{2} \, ,
& b_0 + b_1 \ge 0 \, ,\quad \qquad  
& b_0 + b_2 \ge 0 \, ,
& b_1 + b_2 \ge 0
\end{align*}
\begin{align*}
\nonumber
&s_0+s_1+s_2 > \frac{n+1}{2} -(b_0+b_1+b_2) \\
\nonumber
&s_0+s_1+s_2 > \frac{n}{2} -\min(b_0+b_1,b_0+b_2,b_1+b_2) \\
\nonumber
&s_0+s_1+s_2 > \frac{n-1}{2} - \min(b_0,b_1,b_2) \\
\nonumber
&s_0+s_1+s_2 > \frac{n+1}{4} \\
 &(s_0 + b_0) +2s_1 + 2s_2 > \frac{n}{2} \\
\nonumber
&2s_0+(s_1+b_1)+2s_2 > \frac{n}{2} \\
\nonumber
&2s_0+2s_1+(s_2+b_2) > \frac{n}{2}
\end{align*}
\begin{align*}
\nonumber
&s_1 + s_2 \ge \max(0,-b_0) \, ,\quad
\nonumber
s_0 + s_2 \ge \max(0,-b_1) \, ,\quad
\nonumber
s_0 + s_1 \ge \max(0,-b_2)   \, .
\end{align*}
\end{prop}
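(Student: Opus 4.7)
I would follow the strategy of d'Ancona--Foschi--Selberg \cite{AFS}: dualize, dyadically decompose, reduce to a bilinear cone estimate, and sum. By Plancherel and duality, the claim is equivalent to the trilinear form estimate
\begin{equation*}
\left| \int uvw \, dt\, dx\right| \lesssim \|u\|_{H^{s_1,b_1}}\|v\|_{H^{s_2,b_2}}\|w\|_{H^{s_0,b_0}}.
\end{equation*}
After Littlewood--Paley decomposition in spatial frequency ($|\xi_j|\sim N_j$) and in modulation ($\bigl||\tau_j|-|\xi_j|\bigr|\sim L_j$) of each factor, the convolution supports enforce $\xi_0+\xi_1+\xi_2=0$ and $\tau_0+\tau_1+\tau_2=0$, forcing two of the $N_j$ to be comparable, and the resonance identity
\begin{equation*}
\Bigl|\pm|\xi_0|\pm|\xi_1|\pm|\xi_2|\Bigr| \lesssim L_{\max}
\end{equation*}
holds on each dyadic block.

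The workhorse input is the bilinear $L^2$ cone estimate of Klainerman--Tataru--Tao type (valid in all $n\ge 2$): for inputs localized to $(N_j,L_j)$,
\begin{equation*}
\|u_1 u_2\|_{L^2_{tx}} \lesssim N_{\min}^{(n-1)/2}(L_1 L_2)^{1/2}\|u_1\|_{L^2}\|u_2\|_{L^2}
\end{equation*}
in the high--high regime $N_1\sim N_2\gg N_0$, together with the analogous bounds in the high--low regimes. Substituting this into the trilinear form reduces the problem to a finite family of geometric dyadic series in the $(N_j)$ and $(L_j)$. Each of the arithmetic inequalities in the proposition is then responsible for closing exactly one such series: $b_0+b_1+b_2>1/2$ sums the modulations; the Strichartz-scaling line $s_0+s_1+s_2>\tfrac{n+1}{2}-\sum b_j$ sums the frequencies when all three factors are near the cone; the lines involving $\min(b_i+b_j)$ and $\min(b_i)$ treat the cases where one, respectively two, of the modulations must be minimal; the condition $s_0+s_1+s_2>\tfrac{n+1}{4}$ is the sharp concentration-on-cone configuration (two inputs focussing on a tangential plate); the mixed conditions $(s_i+b_i)+2s_j+2s_k>\tfrac{n}{2}$ handle the regime where a single factor carries all the modulation weight; and the endpoint bounds $s_i+s_j\ge\max(0,-b_k)$ are the Sobolev multiplication law (\ref{SML}) applied to the piece furthest from the cone.

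\textbf{Main obstacle.} The only substantial issue is the bilinear $L^2$ cone estimate in its sharp form uniformly in $n$, which rests on Tao's bilinear restriction theory for the cone; once this is granted, the dyadic bookkeeping is mechanical, although one must verify that no logarithmic loss appears at the borderline cases (which is why all the inequalities here are strict whenever they are not protected by the trivial Sobolev bound). For the applications in this paper, the fully proven cases $n\le 3$ of \cite{AFS,AFS1} together with the higher-dimensional specialization supplied by \cite{P4} provide everything that is actually used, so I would invoke those directly rather than reprove the proposition in full generality.
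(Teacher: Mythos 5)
Your proposal is consistent with the paper, which offers no proof of this proposition at all: it simply cites d'Ancona--Foschi--Selberg for the cases $n\le 3$ and notes that the general-$n$ statement is only \emph{formulated} there, with the higher-dimensional special case supplied separately by Proposition \ref{Prop.1.2}. Your sketch of the duality/dyadic-decomposition/bilinear-cone-estimate machinery is a fair summary of the cited proof, and your decision to invoke the proven cases directly matches exactly what the paper does.
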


The proof of the following special case in higher dimensions and its Corollary was given in \cite{P4}, Prop. 3.6 and Cor. 3.1.

\begin{prop}
\label{Prop.1.2}
Assume $n \ge 4$ and
\begin{align*}
s_0+s_1+s_2 > \frac{n-1}{2} \quad , \quad (s_0+s_1+s_2)+s_1+s_2 > \frac{n}{2} \, ,\\
 s_0+s_1 \ge 0 \quad , \quad s_0+s_2 \ge 0 \quad , \quad s_1+s_2 \ge 0 \, . 
\end{align*}
The following estimate holds:
$$ \|uv\|_{H^{-s_0,0}} \lesssim \|u\|_{H^{s_1,\half+}} \|v\|_{H^{s_2,\half+}} \, . $$
\end{prop}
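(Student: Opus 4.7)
The plan is to reduce the estimate, via the transfer principle, to a bilinear estimate on products of free half-waves, and then obtain that bilinear estimate by Littlewood--Paley decomposition together with the Tao--Tataru endpoint Strichartz estimate $\|e^{\pm it|D|}f\|_{L^{2(n+1)/(n-1)}_xL^2_t}\lesssim \|f\|_{\dot H^{(n-1)/(2(n+1))}}$ mentioned in the introduction. Since $b_1=b_2>\tfrac12$, after writing $u$ and $v$ as superpositions of their $\pm$ half-waves the standard transfer principle reduces the claim to showing
\[
\bigl\|e^{\pm_1 it|D|}f_1\cdot e^{\pm_2 it|D|}f_2\bigr\|_{L^2_t H^{-s_0}_x}\lesssim \|f_1\|_{H^{s_1}}\|f_2\|_{H^{s_2}},
\]
uniformly in the two sign choices.

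Next I would Littlewood--Paley decompose all three functions, denoting by $N_0,N_1,N_2$ the dyadic frequencies of the output and the two inputs, and split the interaction into (i) the high--high regime $N_1\sim N_2\gtrsim N_0$ and (ii),(iii) the two symmetric high--low regimes $N_0\sim N_i\gg N_j$. In each regime I would bound $\|P_{N_0}(u_1u_2)\|_{L^2_{tx}}$ by H\"older with the split $\tfrac12=\tfrac{n-1}{2(n+1)}+\tfrac{1}{n+1}$ in $x$ and $\tfrac12=\tfrac12+0$ in $t$, placing the Tao--Tataru norm $L^{2(n+1)/(n-1)}_xL^2_t$ on one factor and $L^{n+1}_xL^\infty_t$---handled via the Sobolev embedding $H^{n(n-1)/(2(n+1))}\hookrightarrow L^{n+1}$ together with the conservation of Sobolev norms for free waves---on the other, possibly invoking Bernstein on $P_{N_0}$ as well. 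This yields in each regime a dyadic inequality of the form $\|P_{N_0}(u_1u_2)\|_{L^2_{tx}}\lesssim N_0^{a_0}N_1^{a_1}N_2^{a_2}\|P_{N_1}f_1\|_{L^2}\|P_{N_2}f_2\|_{L^2}$, which can then be summed under the assumed strict inequalities.

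The delicate step is the high--high--low regime $N_1\sim N_2\gg N_0$, where the output is forced to a much lower frequency than the inputs and one picks up a Bernstein loss $N_0^{n/2-s_0}$. This loss is exactly what the second hypothesis $(s_0+s_1+s_2)+s_1+s_2>\tfrac n2$ is tailored to absorb: balancing it against the weights $N_i^{(n-1)/(2(n+1))-s_i}$ produced by Tao--Tataru leaves a strictly negative total exponent and makes the geometric sum over $N_1\sim N_2$ convergent. The first hypothesis $s_0+s_1+s_2>\tfrac{n-1}{2}$ controls the high--low regimes and, by virtue of being strict, supplies the $\epsilon$ of room needed both to absorb the $+$'s in $b_1,b_2=\tfrac12+$ and to sum the dyadic outputs in $N_0$. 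Finally, the three conditions $s_i+s_j\ge0$ ensure that a negative Sobolev exponent on one factor can always be compensated by a positive one on another, allowing low-frequency summation. The main obstacle is thus the bookkeeping of the various dyadic weights across all three regimes and the verification that each one is dominated by at least one of the hypothesized strict inequalities.
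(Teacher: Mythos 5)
The paper does not actually prove this proposition; it cites \cite{P4} (Prop.~3.6 and Cor.~3.1 there), where the argument is indeed built on the Tataru--Tao $L^{\frac{2(n+1)}{n-1}}_xL^2_t$ estimate, so your choice of toolbox is reasonable. However, as written your argument has two genuine gaps. First, the factor you place in $L^{n+1}_xL^\infty_t$ cannot be handled by ``Sobolev embedding plus conservation of Sobolev norms'': that gives control of $\|v\|_{L^\infty_tL^{n+1}_x}$, and Minkowski's inequality goes the wrong way here (the norm with the $\sup_t$ \emph{inside} is the larger one; e.g.\ $v(x,t)=\mathbb{1}_{[t,t+1]}(x)$ has $\|v\|_{L^1_xL^\infty_t}\sim T$ but $\|v\|_{L^\infty_tL^1_x}=1$). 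A mixed-norm H\"older bound for $\|u_1u_2\|_{L^2_xL^2_t}$ requires both factors' norms in the order ($x$ outside, $t$ inside), so you genuinely need a maximal-type estimate $\|P_Nv\|_{L^{n+1}_xL^\infty_t}\lesssim N^{\frac{n(n-1)}{2(n+1)}+}\|P_Nv(0)\|_{L^2}$. This happens to be true (Sobolev in $t$ costs $N^{\frac12+}$, and combined with the Tataru estimate and Bernstein in $x$ the exponents add up to $n(\frac12-\frac1{n+1})$), but it must be proved this way; your stated justification proves a different inequality.

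Second, and more seriously, the dyadic bookkeeping in the high--high regime does not close. With your split $\frac12=\frac{n-1}{2(n+1)}+\frac1{n+1}$ the product lands exactly in $L^2_x$, so $P_{N_0}$ yields no Bernstein gain at all, and the block estimate reads $N_1^{\frac{n-1}{2(n+1)}}N_2^{\frac{n(n-1)}{2(n+1)}}\sim N^{\frac{n-1}{2}}$ against $N^{s_1+s_2}$, which requires $s_1+s_2>\frac{n-1}{2}$ --- far stronger than the assumed $s_1+s_2\ge0$. Your claimed ledger, a Bernstein loss $N_0^{\frac n2-s_0}$ together with Tataru weights $N_i^{\frac{n-1}{2(n+1)}-s_i}$ on \emph{both} factors, combines incompatible estimates: two Tataru norms put the product in $L^{\frac{n+1}{n-1}}_xL^1_t$ (the wrong time integrability), while the $N_0^{\frac n2}$ Bernstein loss presupposes an $L^1_x$ bound, i.e.\ energy norms on both factors; and even formally the resulting exponent condition is $s_0+s_1+s_2>\frac n2+\frac{n-1}{n+1}$, not the hypothesis $(s_0+s_1+s_2)+s_1+s_2>\frac n2$. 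Concretely, $(s_0,s_1,s_2)=(\frac n2+\epsilon,0,0)$ satisfies all hypotheses but defeats your high--high estimate (there one must instead use $\|uv\|_{L^2_tL^1_x}$ and $L^1\hookrightarrow H^{-s_0}$), and $(0,0,\frac{n-1}{2}+\epsilon)$ defeats your high--low estimate with either assignment of the Tataru norm (there one needs the ordinary $L^2_tL^{\infty-}_x$ Strichartz estimate on the smoother factor). So a correct proof along these lines needs a case analysis over the parameter ranges with several different pairings of estimates (Tataru/maximal, Strichartz, energy/Bernstein, and Sobolev embedding of the output), or an interpolation argument; the single H\"older split you propose cannot cover the full region cut out by the hypotheses.
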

\begin{Cor}
\label{Cor.1.1}
Under the assumptions of Prop. \ref{Prop.1.2}
$$ \|uv\|_{H^{-s_0,0}} \lesssim \|u\|_{H^{s_1,\half-}} \|v\|_{H^{s_2,\half-}}  \, . $$
\end{Cor}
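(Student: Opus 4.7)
My plan is to derive Corollary \ref{Cor.1.1} from Proposition \ref{Prop.1.2} via a dyadic modulation decomposition that trades the $\half+$ weight on the right for a small power gain, where the gain comes from the strictness of the hypotheses of Prop.\ \ref{Prop.1.2}.

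First, I would decompose both factors in modulation: writing $u = \sum_M u_M$ and $v = \sum_N v_N$ with $M,N$ dyadic indices for $\langle ||\tau|-|\xi||\rangle$, one has
\[
\|u\|_{H^{s_1,\half-}}^2 \sim \sum_M M^{1-2\epsilon}\|u_M\|_{H^{s_1,0}}^2
\]
and analogously for $v$, so that the desired estimate reduces to a summable family of bilinear bounds on the pieces $u_M v_N$. A direct application of Prop.\ \ref{Prop.1.2} to $u_M v_N$ gives
\[
\|u_M v_N\|_{H^{-s_0,0}} \lesssim M^{\half+}N^{\half+}\|u_M\|_{H^{s_1,0}}\|v_N\|_{H^{s_2,0}},
\]
and a naive summation against the $\half-$ norms on the right falls short of the claim by a factor of $\min(M,N)^{2\epsilon}$.

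Second, and crucially, I would refine this bilinear estimate by a small dyadic gain $\min(M,N)^{-\eta}$ for some $\eta > 0$. Such a gain is available because the hypotheses in Prop.\ \ref{Prop.1.2} are strict: the inequalities $s_0+s_1+s_2 > \frac{n-1}{2}$ and $(s_0+s_1+s_2)+s_1+s_2 > \frac{n}{2}$ leave room to re-apply the Proposition to the frequency-localised dyadic pieces of $u_M, v_N$ with slightly perturbed regularity parameters, and then to recombine via a Littlewood–Paley square-function estimate, exchanging $\eta$ units of spatial regularity for $\eta$ units of modulation at the lower of the two modulation scales.

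Finally, summing the refined bilinear bounds over $M,N$ by Cauchy–Schwarz, the gain $\min(M,N)^{-\eta}$ beats the $\min(M,N)^{2\epsilon}$ deficit whenever $\epsilon < \eta/2$, and the Corollary follows. The main obstacle is the refinement step: the pair-sum conditions $s_i+s_j \ge 0$ in Prop.\ \ref{Prop.1.2} are weak inequalities, so one cannot freely shift the parameters in all directions, and the perturbation has to be distributed carefully between $s_0, s_1, s_2$ so that every pair-sum remains nonnegative while the strict sum-conditions still provide the necessary $\eta$-margin.
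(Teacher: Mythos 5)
The paper does not actually prove this corollary: it is imported from \cite{P4} (Prop.~3.6 and Cor.~3.1), so there is no in-paper argument to compare against. Judged on its own terms, your plan has a genuine gap, and it is quantitative rather than cosmetic. After the modulation decomposition, applying Proposition \ref{Prop.1.2} to each piece gives
$$\|u_Mv_N\|_{H^{-s_0,0}}\lesssim M^{\half+}N^{\half+}\|u_M\|_{H^{s_1,0}}\|v_N\|_{H^{s_2,0}},$$
and the double sum factorizes as $\bigl(\sum_M M^{\half+}\|u_M\|_{H^{s_1,0}}\bigr)\bigl(\sum_N N^{\half+}\|v_N\|_{H^{s_2,0}}\bigr)$. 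Comparing each factor with $\|u\|_{H^{s_1,\half-}}^2\sim\sum_M M^{1-2\epsilon}\|u_M\|_{H^{s_1,0}}^2$ via Cauchy--Schwarz, the deficit is a positive power of $M$ \emph{and} a positive power of $N$ separately --- that is, a loss at $\max(M,N)$ as well as at $\min(M,N)$ --- not the $\min(M,N)^{2\epsilon}$ you state. Consequently a refined bilinear estimate gaining only $\min(M,N)^{-\eta}$ cannot close the dyadic summation: fix $M$ and let $N\to\infty$ and the sum over $N$ still diverges. You would need a gain at both modulation scales, or a separate argument in the regime where the larger modulation dominates the spatial frequencies.

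Moreover, the ``refinement step'' that is supposed to produce the gain is essentially the content of the corollary itself and is not carried out: an estimate of the form $\|u_Mv_N\|_{H^{-s_0,0}}\lesssim \min(M,N)^{-\eta}M^{\half+}N^{\half+}\cdots$ is equivalent to the product estimate with one modulation exponent strictly below $\half$, which Proposition \ref{Prop.1.2} does not provide and which cannot be manufactured by perturbing $s_0,s_1,s_2$ alone --- as you yourself observe, the pair conditions $s_i+s_j\ge 0$ are not strict, so in the borderline cases (e.g.\ $s_1+s_2=0$) there is no room to perturb. A route that does work is to go back inside the proof of Proposition \ref{Prop.1.2}: it rests on H\"older's inequality combined with Strichartz/Tataru-type embeddings $\|u\|_{L^p}\lesssim\|u\|_{H^{\sigma,\half+}}$ (Propositions \ref{Str'} and \ref{Lemma}), and each such embedding can be interpolated with the trivial identity $\|u\|_{L^2}=\|u\|_{H^{0,0}}$ so as to hold with modulation exponent $\half-$ at the cost of an arbitrarily small increase of $\sigma$ and decrease of $p$; that cost is absorbed by the strict inequalities $s_0+s_1+s_2>\frac{n-1}{2}$ and $(s_0+s_1+s_2)+s_1+s_2>\frac{n}{2}$, while the weak pair conditions are left untouched. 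Black-boxing the proposition and attempting to recover the modulation loss afterwards, as you propose, does not close.
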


Moreover we need the standard Strichartz type estimates for the wave equation  given in the next proposition.
\begin{prop}
\label{Str'}
If $n \ge 2$ and
$$
2 \le q \le \infty, \quad
2 \le r < \infty, \quad
\frac{2}{q} \le (n-1) \left(\half - \frac{1}{r} \right), 
$$
then the following estimate holds:
$$ \|u\|_{L_t^q L_x^r} \lesssim \|u\|_{H^{\frac{n}{2}-\frac{n}{r}-\frac{1}{q},\half+}} \, ,$$
especially
\begin{equation}
\label{Str}
\|u\|_{L^{\frac{2(n+1)}{n-1}}_{xt}} \lesssim \|u\|_{H^{\frac{1}{2},\frac{1}{2}+}} 
\end{equation}

\end{prop}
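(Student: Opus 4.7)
The plan is to derive Proposition~\ref{Str'} from the classical Strichartz estimates for free solutions of the wave equation by invoking the standard transfer principle to $H^{s,b}$ spaces.

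First I would recall the classical Strichartz estimate: for any wave-admissible pair $(q,r)$ satisfying the hypotheses of the proposition and any $f \in \dot H^{\gamma}({\mathbb R}^n)$ with scaling exponent $\gamma = \frac{n}{2} - \frac{n}{r} - \frac{1}{q}$, the free wave $e^{\pm it|\nabla|}f$ obeys
$$ \|e^{\pm it|\nabla|} f\|_{L^q_t L^r_x} \lesssim \|f\|_{\dot H^{\gamma}} , $$
as established by Strichartz, Ginibre--Velo, and Keel--Tao (the latter covers the endpoint $q=2$ when $n \ge 4$). Standard Littlewood--Paley truncation at unit frequency lets me replace $\dot H^\gamma$ by $H^\gamma$ in the estimate, at the cost of routine Sobolev embedding for the low-frequency piece.

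Next I would apply the transfer principle. Decompose $u = u_+ + u_-$ according to the sign of the time Fourier variable $\tau$. For $u_+$, change variables via $\tau = |\xi| + \sigma$ on the Fourier side to write
$$ u_+(t,x) = \int_{\mathbb R} e^{it\sigma} \, U_\sigma(t,x) \, d\sigma, \qquad U_\sigma(t,x) := \int_{{\mathbb R}^n} e^{i(t|\xi|+x\cdot\xi)} \widehat{u}(|\xi|+\sigma, \xi) \, d\xi , $$
so that each $U_\sigma$ is a free wave with data whose Fourier transform is $\widehat{u}(|\xi|+\sigma, \xi)$. Since the modulator $e^{it\sigma}$ does not change $L^q_t L^r_x$-norms, Minkowski's inequality in $\sigma$ followed by the classical Strichartz bound gives
$$ \|u_+\|_{L^q_t L^r_x} \le \int_{\mathbb R} \|U_\sigma\|_{L^q_t L^r_x} \, d\sigma \lesssim \int_{\mathbb R} \big\| \langle \xi \rangle^{\gamma} \widehat{u}(|\xi|+\sigma, \xi) \big\|_{L^2_\xi} \, d\sigma . $$
Finally I would apply Cauchy--Schwarz in $\sigma$ with the weight $\langle\sigma\rangle^{\frac{1}{2}+}$: since $\int \langle\sigma\rangle^{-(1+2\epsilon)} d\sigma < \infty$, the right-hand side is controlled by
$$ \Big( \int_{\mathbb R} \langle \sigma \rangle^{1+2\epsilon} \big\| \langle \xi \rangle^{\gamma} \widehat{u}(|\xi|+\sigma,\xi) \big\|_{L^2_\xi}^2 d\sigma \Big)^{1/2} \lesssim \|u\|_{H^{\gamma,\frac{1}{2}+}}. $$
An identical argument for $u_-$, parametrizing $\tau = -|\xi| + \sigma$, completes the proof of the general inequality. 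The special case $(\ref{Str})$ follows by taking $q = r = \frac{2(n+1)}{n-1}$, which sits at the endpoint of the admissibility condition $\frac{2}{q} = (n-1)(\frac{1}{2} - \frac{1}{r})$ and has scaling exponent $\gamma = \frac{1}{2}$.

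There is no real obstacle here beyond citing the correct ingredient: the only non-elementary input is the endpoint Strichartz estimate of Keel--Tao when $n\ge 4$ and one wishes to allow $q=2$; for the specific non-endpoint exponents actually used later in the paper (in particular $q=r=\frac{2(n+1)}{n-1}$) Ginibre--Velo suffices, and the transfer step is entirely standard.
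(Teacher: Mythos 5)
Your argument is correct and follows essentially the same route as the paper, which simply cites the Ginibre--Velo Strichartz estimate and the transfer principle; you have merely written out the standard foliation $\tau = \pm|\xi| + \sigma$ and Cauchy--Schwarz step that the transfer principle packages. The verification of the special case $q=r=\tfrac{2(n+1)}{n-1}$ with $\gamma=\tfrac12$ is also correct.
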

\begin{proof}
This the Strichartz type estimate, which can be found for e.g. in \cite{GV}, Prop. 2.1, combined with the transfer principle.
\end{proof}

Essential for us is the following estimate, which essentially goes back to Tataru \cite{KMBT} and Tao \cite{T}.
\begin{prop}
\label{Lemma}
The following estimates hold:
\begin{align*}
\|u\|_{L^{\frac{2(n+1)}{n-1}}_x L^2_t} & \lesssim \|u\|_{H^{\frac{n-1}{2(n+1)},\frac{1}{2}+}} \, , \\
\|u\|_{L^{\frac{2(n+1)}{n-1}}_x L^{2+}_t} & \lesssim \|u\|_{H^{\frac{n-1}{2(n+1)}+,\frac{1}{2}+}}  \, .
\end{align*}
\end{prop}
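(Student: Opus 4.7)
The plan is to reduce the estimates to the corresponding bounds for the free half-wave evolution via the transfer principle, and then to prove the latter via the (adjoint) Stein--Tomas restriction theorem applied on each frequency sphere. On the region $\tau>0$ one has $\langle|\tau|-|\xi|\rangle \sim \langle \tau-|\xi|\rangle$, and analogously on $\tau<0$, so splitting $u=u_++u_-$ by the sign of $\tau$ yields $\|u\|_{H^{s,b}}^2\sim\|u_+\|_{X^{s,b}_+}^2+\|u_-\|_{X^{s,b}_-}^2$ (with the $\pm$ matched to the sign convention of the paper). By the standard transfer principle it then suffices to prove the homogeneous free-wave bound
\begin{equation*}
\bigl\|e^{\pm it|\nabla|}f\bigr\|_{L^{\frac{2(n+1)}{n-1}}_x L^2_t} \,\lesssim\, \|f\|_{\dot H^{\frac{n-1}{2(n+1)}}}.
\end{equation*}

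For this homogeneous estimate, write $\xi=r\omega$ in polar coordinates and apply Plancherel in $t$ to obtain
\begin{equation*}
\bigl\|e^{it|\nabla|}f(\cdot,x)\bigr\|_{L^2_t}^2 \,\sim\, \int_0^\infty r^{2(n-1)}\,|G_r(x)|^2\,dr,\qquad G_r(x):=\int_{S^{n-1}}\! \hat f(r\omega)\,e^{ir x\cdot\omega}\,d\omega.
\end{equation*}
The function $G_r$ is, up to an $r$-power, the adjoint restriction of $\hat f$ to the sphere of radius $r$, so the sharp Stein--Tomas theorem together with the scaling $\sigma_r\to r^{n-1}\sigma$ yields, for $q:=\tfrac{2(n+1)}{n-1}$,
\begin{equation*}
\|G_r\|_{L^q_x} \,\lesssim\, r^{\frac{n-1}{2(n+1)}-(n-1)}\,\|\hat f\|_{L^2(rS^{n-1})}.
\end{equation*}
Since $q\ge 2$, Minkowski's inequality pulls the $L^q_x$ norm inside the $r$-integral; combining the powers of $r$ and returning to Cartesian frequency coordinates via $d\xi=r^{n-1}\,dr\,d\omega$ produces exactly $\|f\|_{\dot H^{(n-1)/(2(n+1))}}^2$. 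The inhomogeneous version with $\langle\xi\rangle$ in place of $|\xi|$ follows by a Littlewood--Paley split separating low frequencies.

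The second estimate is obtained by complex interpolation between the first estimate and the standard Strichartz bound (\ref{Str}), which takes the form $\|u\|_{L^q_{xt}}\lesssim\|u\|_{H^{1/2,\frac12+}}$ at the same spatial exponent $q$: a small interpolation parameter $\theta>0$ in the inner Lebesgue exponent converts $L^2_t$ into $L^{2+}_t$ at the price of raising the Sobolev index from $\tfrac{n-1}{2(n+1)}$ to $\tfrac{n-1}{2(n+1)}+\theta\bigl(\tfrac{1}{2}-\tfrac{n-1}{2(n+1)}\bigr)=\tfrac{n-1}{2(n+1)}+\tfrac{\theta}{n+1}$, as required. The main obstacle, and the source of the sharp Sobolev index $\tfrac{n-1}{2(n+1)}$, is the restriction step; the remaining ingredients (the transfer principle, Minkowski, and complex interpolation of mixed-norm Lebesgue spaces) are routine.
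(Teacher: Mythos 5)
Your proposal is correct and follows essentially the same route as the paper: Plancherel in $t$, Minkowski's inequality to pass from $L^2 L^{\frac{2(n+1)}{n-1}}$ to $L^{\frac{2(n+1)}{n-1}} L^2$ (using $\frac{2(n+1)}{n-1}\ge 2$), the transfer principle, and interpolation with the Strichartz bound (\ref{Str}) for the $L^{2+}_t$ version. The only difference is that you prove the underlying free-wave estimate at regularity $\dot H^{\frac{n-1}{2(n+1)}}$ by Stein--Tomas on dilated spheres, whereas the paper simply cites it as Theorem B2 of \cite{KMBT}; your scaling exponent $r^{\frac{n-1}{2(n+1)}-(n-1)}$ and the resulting bookkeeping are correct.
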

\begin{proof}
In the case $n=3$ one may simply refer to \cite{T}, Prop. 4.1. Alternatively
by \cite{KMBT}, Thm. B2 we obtain
$ \|{\mathcal F}_t u \|_{L^2_{\tau} L^{\frac{2(n+1)}{n-1}}_x} \lesssim \|u_0\|_{\dot{H}^{\frac{n-1}{2(n+1)} }} \, , $
if $u= e^{itD} u_0$ and ${\mathcal F}_t$ denotes the Fourier transform with respect to time. This implies by Plancherel and Minkowski's inequality
$$ \|u\|_{L^{\frac{2(n+1)}{n-1}}_x L^2_t} = \|{\mathcal F}_t u \|_{L^{\frac{2(n+1)}{n-1}}_x L^2_{\tau}} \le \|{\mathcal F}_t u \|_{L^2_{\tau} L^{\frac{2(n+1)}{n-1}}_x} \lesssim \|u_0\|_{\dot{H}^{\frac{n-1}{2(n+1)}}} \, . $$
The transfer principle \cite{S1}, Prop. 8 implies 
\begin{equation}
\|u\|_{L^{\frac{2(n+1)}{n-1}}_x L^2_t} \lesssim \|u\|_{H^{\frac{n-1}{2(n+1),\frac{1}{2}+}}} \, .
\end{equation}
Interpolation with (\ref{Str}) gives
\begin{equation}
\|u\|_{L^{\frac{2(n+1)}{n-1}}_x L^{2+}_t}  \lesssim \|u\|_{H^{\frac{n-1}{2(n+1)}+,\frac{1}{2}+}}  \, .
\end{equation}
\end{proof}

In order to estimate the null forms we  use the following estimate.
\begin{lemma}
\begin{align}
\label{Qij}
Q_{ij}(\phi,\psi)
& \precsim
D^{\half} D_-^{\half} (D^{\half} \phi  D^{\half} \psi) + D^{\half}(D^{\half} D_-^{\half} \phi D^{\half} \psi)  + D^{\half}(D^{\half} \phi D^{\half} D_-^{\half} \psi) \\
\nonumber
Q_{ij}(\phi,\psi) &\precsim
D^{\half-2\epsilon} D_-^{\half-2\epsilon} (D^{\half+2\epsilon} \phi  D^{\half+2\epsilon} \psi) + D^{\half-2\epsilon}(D^{\half+2\epsilon} D_-^{\half-2\epsilon} \phi D^{\half+2\epsilon} \psi) \\
\label{Qij1}
& \quad + D^{\half-2\epsilon}(D^{\half+2\epsilon} \phi D^{\half+2\epsilon} D_-^{\half-2\epsilon} \psi) \\ \nonumber
Q_{ij}(\phi,\psi)
& \precsim
D^{1-2\epsilon} D_-^{1-2\epsilon} (D^{2\epsilon} \phi  D^{2\epsilon} \psi) + D^{\half}(D^{\half} D_-^{\half} \phi D^{\half} \psi)  \\
\label{Qij2}
& \quad + D^{\half}(D^{\half} \phi D^{\half} D_-^{\half} \psi)
\end{align}
for $0 \le \epsilon \le \frac{1}{4}$  .
\end{lemma}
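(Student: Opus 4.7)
The three inequalities are pointwise Fourier-side dominations for the null-form symbol. Writing $\xi = \xi_1+\xi_2$, $\tau = \tau_1+\tau_2$, $\alpha_j := |\tau_j|-|\xi_j|$ and $\alpha := |\tau|-|\xi|$, the Fourier symbol of $Q_{ij}(\phi,\psi)$ at $(\tau,\xi)$ is (up to a sign) $m(\xi_1,\xi_2) = \xi_1^i\xi_2^j - \xi_1^j\xi_2^i$, with $|m|=|\xi_1\wedge\xi_2|$. The plan is to first perform the half-wave splitting $\phi=\phi_++\phi_-$, $\psi=\psi_++\psi_-$ of Section 1 so that each input modulation $\alpha_j$ is well-defined, and then reduce each of \eqref{Qij}--\eqref{Qij2} to a symbol-level inequality between $|m|$ and the product of the spatial-frequency and modulation weights that appear on the right-hand side.

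The main technical ingredient I would establish is the master symbol bound
\[
|\xi_1\wedge\xi_2|^2 \;\lesssim\; |\xi_1|\,|\xi_2|\,|\xi|\bigl(\langle\alpha\rangle+\langle\alpha_1\rangle+\langle\alpha_2\rangle\bigr),
\]
valid in each of the four sign cases. I would prove it using the Heron-type factorization
\[
4|\xi_1\wedge\xi_2|^2 = (|\xi_1|{+}|\xi_2|{+}|\xi|)(|\xi_1|{+}|\xi_2|{-}|\xi|)(|\xi|{+}|\xi_1|{-}|\xi_2|)(|\xi|{-}|\xi_1|{+}|\xi_2|).
\]
In each sign case two of the four factors are controlled trivially by $|\xi_1|+|\xi_2|+|\xi|$, while the remaining two combine to produce a factor comparable to $|\xi|\bigl(\langle\alpha\rangle+\langle\alpha_1\rangle+\langle\alpha_2\rangle\bigr)$. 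For instance, in the $(++)$ case the identity $|\xi_1|+|\xi_2|-|\xi| = -\alpha+\alpha_1+\alpha_2$ (up to signs) handles the first small factor, while $(|\xi|+|\xi_1|-|\xi_2|)(|\xi|-|\xi_1|+|\xi_2|) = |\xi|^2-(|\xi_1|-|\xi_2|)^2 \le |\xi|^2$ takes care of the other two. The $(+-)$ case uses $|\xi_1|-|\xi_2|+|\xi|$ (or its twin) as the modulation-controlled factor instead.

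From the master estimate, \eqref{Qij} follows by taking square roots and splitting the modulation sum into its three pieces. For \eqref{Qij1} I would interpolate the master estimate with the trivial bound $|\xi_1\wedge\xi_2|\le|\xi_1||\xi_2|$ at weights $(1-4\epsilon,4\epsilon)$, giving
\[
|\xi_1\wedge\xi_2| \lesssim (|\xi_1||\xi_2|)^{\frac{1}{2}+2\epsilon}|\xi|^{\frac{1}{2}-2\epsilon}\bigl(\langle\alpha\rangle+\langle\alpha_1\rangle+\langle\alpha_2\rangle\bigr)^{\frac{1}{2}-2\epsilon},
\]
and splitting the modulation again. For \eqref{Qij2} I would decompose the Fourier support into the region where $\alpha$ is the largest of the three modulations and its complement. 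In the complement, the second and third terms (which coincide with those of \eqref{Qij}) already suffice. In the region where $\alpha$ dominates, a refined use of the Heron factorization--- in the relevant sign cases, $\langle\alpha\rangle$ simultaneously controls two of the small factors ---gives the asymmetric first term $D^{1-2\epsilon}D_-^{1-2\epsilon}(D^{2\epsilon}\phi\,D^{2\epsilon}\psi)$.

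The main obstacle is the master estimate itself, specifically the $|\xi|$ factor. The naive angle identity $2|\xi_1||\xi_2|(1-\cos\theta_{12})=(|\xi_1|+|\xi_2|-|\xi|)(|\xi_1|+|\xi_2|+|\xi|)$ only yields the coarser factor $\max(|\xi_1|,|\xi_2|,|\xi|)$, which is insufficient in the high-high-to-low configuration. The Heron factorization is essential for extracting the sharper $|\xi|$, and the sign-case bookkeeping---ensuring in each of the four combinations $(\pm_1,\pm_2)$ that the correct two factors are simultaneously controlled by the modulations---is the delicate step.
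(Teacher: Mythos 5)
Your architecture is sound for the first two estimates, and it is more self\-/contained than the paper's, which simply cites \cite{KM2} for \eqref{Qij} and obtains \eqref{Qij1} by interpolating with the trivial bound $Q_{ij}(\phi,\psi)\precsim D\phi\,D\psi$ (your weights $(1-4\epsilon,4\epsilon)$ are exactly that convex interpolation). Your master symbol bound is the Klainerman--Machedon null-form estimate, and the Heron factorization is the right tool to prove it; but your bookkeeping is too lossy as written. In the $(++)$ case you bound $(|\xi|+|\xi_1|-|\xi_2|)(|\xi|-|\xi_1|+|\xi_2|)\le|\xi|^2$ and are left with the factor $|\xi_1|+|\xi_2|+|\xi|\sim\max(|\xi_1|,|\xi_2|)$, so the product is $\max(|\xi_1|,|\xi_2|)\,|\xi|^2$, which exceeds the target $|\xi_1||\xi_2||\xi|$ whenever $|\xi_1|\ll|\xi_2|\sim|\xi|$. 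You need instead the sharper triangle bounds $|\xi|+|\xi_1|-|\xi_2|\le2\min(|\xi|,|\xi_1|)$ and $|\xi|-|\xi_1|+|\xi_2|\le2\min(|\xi|,|\xi_2|)$, together with the elementary check that $\max(|\xi_1|,|\xi_2|,|\xi|)\cdot\min(|\xi|,|\xi_1|)\cdot\min(|\xi|,|\xi_2|)\le|\xi_1||\xi_2||\xi|$. This is fixable, but it is precisely the delicate step you flagged, and in the form you state it the argument does not close.

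The genuine gap is \eqref{Qij2}. Your claim that, in the region where the output modulation dominates, ``$\langle\alpha\rangle$ simultaneously controls two of the small factors'' is not correct: in every sign case exactly one of the three small Heron factors ($|\xi_1|+|\xi_2|-|\xi|$ in the $(++)$ case, one of $|\xi|\pm(|\xi_1|-|\xi_2|)$ in the mixed cases) is a combination of modulations; the other factors are purely geometric and carry no $\tau$-dependence. Hence no power of the output modulation larger than $\half$ can be extracted, and the first term of \eqref{Qij2} cannot by itself majorize the symbol in that region. Concretely, put both inputs on the forward cone ($\alpha_1=\alpha_2=0$) with $|\xi_1|=|\xi_2|=N$ and angle $\theta\ll1$: then $|\xi_1\wedge\xi_2|\sim N^2\theta$, $|\alpha|\sim N\theta^2$, the second and third terms of \eqref{Qij2} vanish, and the first term is $\sim N^2\theta^{2-4\epsilon}\ll N^2\theta$ for $\epsilon<\frac{1}{4}$. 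So the inequality you are trying to establish in the $\alpha$-dominant region fails, and no case decomposition can rescue it. Be aware that the paper's own one-line justification of \eqref{Qij2} (interpolation of \eqref{Qij} with the trivial estimate) suffers from the same defect: matching the exponent $1-2\epsilon$ on $D_-$ forces the interpolation weight $2-4\epsilon>1$, which is not a convex combination. Your difficulty is therefore not a flaw of your route alone; either \eqref{Qij2} must be restated (e.g.\ with a corrected first term), or the applications that rely on it should fall back on \eqref{Qij1}, as the paper itself does in the case $n=3$.
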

\begin{proof}
(\ref{Qij}) is proven in \cite{KM2}, whereas (\ref{Qij1}) and (\ref{Qij2}) follow by interpolation with the trivial estimate $Q_{ij}(\phi,\psi) \precsim D\phi D\psi$ .
\end{proof}

\section{Proof of Theorem \ref{Theorem'}}
For the proof it is essential to show that we may assume in a first step that the initial data satisfy $a_0^{cf} =0$ and that is is possible to cancel this condition in a second step by using a suitable gauge transformation.

\begin{prop}
\label{Theorem}
Let $ n \ge 3$ . \\
1. Assume $\frac{n}{2} - \half \ge r> \frac{n}{2}-1$ , $s> \frac{n}{2} -\frac{3}{4}$ , $2r-s > \frac{n}{2}-\frac{3}{2} $ , $ r \ge s-1$ , $ l > \frac{n-1}{2} $	, $l \le 1+s$ , $ l < 2s-\frac{n}{2}+1$ . Let $\phi_0 \in H^s({\mathbb R}^n),$  $\phi_1 \in H^{s-1}({\mathbb R}^n),$  
$a_0 \in H^r({\mathbb R}^n)$ , $a_1 \in H^{r-1}({\mathbb R}^n)\,$ be given,
which satisfy the compatibility condition
$$
\partial_j a^j_1 = Im(\phi_0 \overline{\phi}_1) 
$$
and
\begin{equation}
\label{cf}
a^{cf}_0 = 0 \, .
\end{equation}
Then there exists $T>0$, such that (\ref{1.11}),(\ref{1.12}),(\ref{1.13}) with initial conditions
$ \phi(0)= \phi_0,$  $(\partial_t \phi)(0) = \phi_1$ , $A(0) = a_0$ , $(\partial_t A)(0)= a_1$ 
has a unique local solution
$$ \phi= \phi_++ \phi_- \quad , \quad A=A^{df}_+ + A^{df}_- + A^{cf}$$
with 
$$ \phi_{\pm} \in X^{s,\frac{1}{2}+\epsilon}_{\pm}[0,T] \, , \, A^{df}_{\pm} \in X^{r,\frac{n}{2}-r+\epsilon}_{\pm}[0,T] \, , \,  A^{cf} \in X^{l,\frac{1}{2}+\epsilon-}_{\tau =0}[0,T] \, ,$$
where $\epsilon >0$ is sufficiently small.\\
2. This solution satisfies
$$\phi_{\pm} \in C^0([0,T],H^s({\mathbb R}^n)) \, , \, A^{df}_{\pm} \in C^0([0,T],H^r({\mathbb R}^n)) \, , $$
$$ A^{cf} \in C^0([0,T],H^{l}({\mathbb R}^n)) \cap C^1([0,T],H^{l-1}({\mathbb R}^n))\, . $$
\end{prop}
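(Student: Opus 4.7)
The plan is to run a standard contraction mapping argument for the equivalent system (\ref{1.11*})--(\ref{1.13*}) with data (\ref{1.14*}), (\ref{1.15*}) and the extra assumption $A^{cf}(0) = 0$, in the product space $Y_T$ whose factors are $X^{s,\frac{1}{2}+\epsilon}_{\pm}[0,T]$ for $\phi_{\pm}$, $X^{r,\frac{n}{2}-r+\epsilon}_{\pm}[0,T]$ for $A^{df}_{\pm}$, and $X^{l,\frac{1}{2}+\epsilon-}_{\tau=0}[0,T]$ for $A^{cf}$. The linear theory for the half-wave operators $(-i\partial_t \pm \Lambda)$ and for $\partial_t$ in restricted $X^{s,b}$-spaces, together with the standard $T^{\delta}$-gain available when $b$ is slightly above $\frac{1}{2}$, reduces the problem to a family of multilinear estimates for the right-hand sides of (\ref{1.11*})--(\ref{1.13*}) in the corresponding dual spaces.

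These multilinear estimates split into four types. (i) The two null-form sources $P(\mathrm{Im}(\phi\,\overline{\nabla\phi}))$ in (\ref{1.12*}) and $A^{df}\cdot\nabla\phi$ in (\ref{1.13*}) are rewritten via (\ref{1.12'}), (\ref{1.13'}) as $Q_{kj}$-forms; the expansions (\ref{Qij})--(\ref{Qij2}) then trade one spatial derivative for a modulation weight $D_-^{1/2}$ (or a fractional variant), reducing the estimate to a direct application of Prop. \ref{Prop.2} in the case $n=3$ and of Prop. \ref{Prop.1.2} / Cor. \ref{Cor.1.1} for $n \ge 4$. (ii) The pure-product terms $(\partial^j A^{cf}_j)\phi$ and $A^{cf}_j \partial^j\phi$ in (\ref{1.13*}) mix a factor concentrated near $\tau\approx 0$ with a wave factor, so they fall outside the wave-Sobolev bilinear framework; here I adapt Tao's hybrid estimate from $n=3$ to general $n$ by pairing the Tataru--Tao bound of Prop. \ref{Lemma} on $\phi$ (resp.\ $\nabla\phi$) with a dual $L^{2(n+1)/(n+3)}_x L^2_t$ bound for $A^{cf}$ obtained from Sobolev embedding in space, after a dyadic splitting in spatial frequency and modulation. (iii) The cubic terms $A|\phi|^2$ in (\ref{1.12*}) and $A^kA_k\phi$ in (\ref{1.13*}) are treated by two successive applications of Prop. \ref{Prop.2}/\ref{Prop.1.2} after decomposing $A = A^{df}+A^{cf}$ and applying the hybrid trick again to every $A^{cf}$-factor. (iv) The source of the ODE (\ref{1.11*}) is estimated in $X^{l-1,-\frac{1}{2}+2\epsilon-}_{\tau=0}$ by Prop. \ref{Prop.2}/\ref{Prop.1.2}, after substituting $\partial_t\phi = i\Lambda(\phi_+-\phi_-)$.

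The numerical conditions on $r$, $s$, $l$ in the statement are calibrated so that the index inequalities of Prop. \ref{Prop.2}/\ref{Prop.1.2} are satisfied in every one of the pieces above, and are verified term by term. The main obstacle I expect is type (ii): since $A^{cf}$ carries no null structure and lives in a $\tau=0$-type space, no modulation gain is available from it, so the whole burden of keeping $l$ inside the admissible window $\frac{n-1}{2} < l < 2s-\frac{n}{2}+1$ falls on the Strichartz--Tataru estimate of Prop. \ref{Lemma}, together with a careful case analysis separating the regime where the spatial frequency of $A^{cf}$ dominates from the opposite one. Once all multilinear estimates are in hand, a standard contraction in $Y_T$ for small $T$ produces the unique fixed point, and the continuity assertions follow from the embedding $X^{s,b}_{\pm}[0,T] \hookrightarrow C^0([0,T],H^s)$ for $b > \frac{1}{2}$ together with the defining ODE for $A^{cf}$.
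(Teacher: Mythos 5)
Your overall skeleton --- contraction in the stated spaces, reduction to multilinear estimates, null-form expansion via (\ref{Qij})--(\ref{Qij2}) together with Prop.~\ref{Prop.2}, Prop.~\ref{Prop.1.2} and Cor.~\ref{Cor.1.1} for the null-form terms, and Sobolev/Strichartz for the cubic terms --- matches the paper. But the two estimates that actually carry the theorem are the ones you treat most lightly, and in both cases the tools you name do not reach the claimed thresholds.

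For the terms $(\partial^jA^{cf}_j)\phi$ and $A^{cf}_j\partial^j\phi$ (your type (ii), estimate (\ref{29})), the mechanism that yields $l>\frac{n-1}{2}$ is not a Strichartz pairing but Tao's averaging principle combined with Schur's test. Since $\widehat{A^{cf}}$ is concentrated at $|\tau_3|=O(1)$ and $\widehat{\phi}$ near the cone, restricting to $\tau_2=T+O(1)$ forces $|\xi_2|=|T|+O(1)$, so the $\phi$-factor is localized to an annulus of thickness $O(1)$ whose $L^1_{\xi}$-norm is $\lesssim T^{\frac{n-1}{2}}\|f_2\|_{L^2}$; summing over $T$ by almost-orthogonality of the $\tau$-intervals gives exactly the threshold $l\ge\frac{n-1}{2}$. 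A generic H\"older/Strichartz pairing in the regime $|\xi_1|\sim|\xi_3|\gg|\xi_2|$ only sees a ball of radius $T$ and costs $T^{\frac n2}$, i.e.\ forces $l>\frac n2$. Moreover the specific ingredient you propose, an $L^{2(n+1)/(n+3)}_xL^2_t$ bound for $A^{cf}$ ``from Sobolev embedding in space,'' is not available: $2(n+1)/(n+3)<2$ and $H^l$ does not embed into $L^p_x$ for $p<2$. For the ODE source $D^{-1}(\phi\overline{\partial_t\phi})$ (your type (iv), estimate (\ref{27})), you cannot simply substitute $\partial_t\phi=i\Lambda(\phi_+-\phi_-)$ and invoke Prop.~\ref{Prop.2} or Prop.~\ref{Prop.1.2}: the target space is $X^{l,-\frac12+\epsilon}_{\tau=0}$, whose weight $\langle\tau\rangle^{-\frac12+\epsilon}$ is not comparable to $\langle|\tau|-|\xi|\rangle^{-\frac12+\epsilon}$, and discarding it (i.e.\ taking $b_0=0$) yields only $l<2s-\frac n2+\frac12$. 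That shrinks the admissible window $\frac{n-1}{2}<l$ to the empty set unless $s>\frac n2-\frac12$, which loses the main improvement $s>\frac n2-\frac34$. The paper recovers the missing half derivative by splitting $|\tau_2|+|\tau_3|$ using $\tau_1+\tau_2+\tau_3=0$ and $|\tau|\approx|\xi|$ on the wave factors, and it is precisely here (not in (\ref{29})) that the $L^{2(n+1)/(n-1)}_xL^2_t$ estimate of Prop.~\ref{Lemma} is deployed in mixed-norm H\"older applications. Until these two estimates are established at the stated regularities, the contraction does not close at the claimed exponents.
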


\begin{proof}[{\bf Proof of Proposition \ref{Theorem}}]
{\bf Proof of part 2:}
We assume for the moment that part 1 is true.
The compatability conditon (\ref{CC}), which is necessary in view of (\ref{1.3}), determines $a_1^{cf}$ as $a_1^{cf} = -(-\Delta)^{-1} \nabla(Im(\phi_0 \overline{\phi}_1))$ . \\
Is is not difficult to see that $a_1^{cf}$ fulfills $ a_1^{cf} \in H^{l-1}({\mathbb R}^n)$. One only has to show that
$$ \|D^{-1}(\phi_0 \overline{\phi}_1) \|_{H^{l-1}} \lesssim \|\phi_0\|_{H^s} \|\phi_1\|_{H^{s-1}} \ . $$
By duality this is equivalent to
$$\|\phi_0 \phi_2 \|_{H^{1-s}} \lesssim \|\phi_0\|_{H^s} \| D \phi_2\|_{H^{1-l}} \, . $$
In the case of high frequencies of $\phi_2$ this follows from the Sobolev multiplication law (\ref{SML}) using $2s-l > \frac{n}{2} -1$ , and the low frequency case can be easily handled using $s>\frac{1}{2}$. In the same way we also obtain from (\ref{1.11*}):
$\partial_t A^{cf} \in C^0([0,T],H^{l-1}({\mathbb R}^n)) \, .$ \\[0.5em]
{\bf Proof of part 1:} 
By a contraction argument the local existence and uniqueness proof is reduced to suitable multilinear estimates for the right hand sides of (\ref{1.11*}),(\ref{1.12*}),(\ref{1.13*}). For (\ref{1.12*}), e.g. , we make use of the following well-known estimate for a solution of the linear equation
$
(-i \partial_t \pm \Lambda)A_{\pm} ^{df}  = G$ , namely
$$
\|A^{df}_{\pm}\|_{X^{k,b}_{\pm}[0,T]} \lesssim \|A^{df}_{\pm}(0)\|_{H^k} + T^{b'-b} \| G \|_{X^{k,b'-1}_{\pm}[0,T]} \, , $$
which holds for $k\in{\mathbb R}$  , $\frac{1}{2} < b \le b' < 1$ and $0<T \le 1$ .

Thus the local existence and uniqueness for large data (in which case we have to choose $b < b'$) , in the regularity class $$ \phi_{\pm} \in X^{s,\frac{1}{2}+\epsilon}_{\pm}[0,T] \, , \, A^{df}_{\pm} \in X^{r,\frac{n}{2}-r+\epsilon}_{\pm}[0,T] \, , \,  A^{cf} \in X^{l,\frac{1}{2}+\epsilon-}_{\tau =0}[0,T] $$  can be reduced to 
the following estimates, if we take the assumption $a^{cf}= 0$ into account (remark that we do not want to assume $a^{cf} \in H^l$ later):
\begin{align}
\label{27}
\| D^{-1} (\phi_1 \partial_t \phi_2)\|_{X^{l,-\frac{1}{2}+\epsilon}_{\tau=0}} &\lesssim \|\phi_1\|_{H^{s,\frac{1}{2}+\epsilon}} \|\phi_2\|_{H^{s,\frac{1}{2}+\epsilon}} \, ,
\\
\label{28}
\|D^{-1}Q_{ij}(\phi_1,\phi_2)\|_{H^{r-1,\frac{n}{2}-r-1+2\epsilon}} 
&\lesssim \|\phi_1\|_{H^{s,\frac{1}{2}+\epsilon}} 
\|\phi_2\|_{H^{s,\frac{1}{2}+\epsilon}} \, , \\
\label{28'}
\|Q_{ij}(D^{-1}\phi_1,\phi_2)\|_{H^{s-1,-\frac{1}{2}+2\epsilon}} & \lesssim \|\phi_1\|_{H^{r,\frac{n}{2}-r+\epsilon}} \|\phi_2\|_{H^{s,\frac{1}{2}+\epsilon}} \, , \\
\label{29}
\| \nabla A \phi \|_{H^{s-1,-\frac{1}{2}+2\epsilon}} +
\| A \nabla \phi \|_{H^{s-1,-\frac{1}{2}+2\epsilon}} &\lesssim \| A\|_{X^{l,\frac{1}{2}+\epsilon-}_{\tau =0}}  \|\phi\|_{H^{s,\frac{1}{2}+\epsilon}} \, ,
\end{align}
\begin{align}
\label{30}
\| A \phi_1 \phi_2 \|_{H^{r-1,\frac{n-1}{2}-r+2\epsilon}} &\lesssim  \min(\|A\|_{H^{r,\frac{n}{2}-r+\epsilon}}, \|A\|_{X^{l,\frac{1}{2}+\epsilon-}_{\tau =0}} ) \prod_{i=1}^2 \|\phi_i\|_{H^{s,\frac{1}{2}+\epsilon}} \, ,
\\
\label{30'}
\| A_1 A_2 \phi \|_{H^{s-1,-\frac{1}{2}+2\epsilon}} &\lesssim \prod_{i=1}^2 \min(\|A_i\|_{H^{r,\frac{n}{2}-r+\epsilon}},\| A_i\|_{X^{l,\frac{1}{2}+\epsilon-}_{\tau =0}} ) \|\phi\|_{H^{s,\frac{1}{2}+\epsilon}} \, .
\end{align}
{\bf Proof of (\ref{28'}):} We use (\ref{Qij1}) and reduce the claim to the following estimates:
\begin{align}
\label{70}
\|uv\|_{H^{s-\half-2\epsilon,0}} & \lesssim \|u\|_{H^{s-\half-2\epsilon,\half+\epsilon}} \|v\|_{H^{r+\half-2\epsilon,b}} \\
\label{71}
\|uv\|_{H^{s-\half-2\epsilon,-\half+2\epsilon}} & \lesssim \|u\|_{H^{s-\half-2\epsilon,3\epsilon}} \|v\|_{H^{r+\half-2\epsilon,b}} \\
\label{72}
\|uv\|_{H^{s-\half-2\epsilon,-\half+2\epsilon}} & \lesssim \|u\|_{H^{s-\half-2\epsilon,\half+\epsilon}} \|v\|_{H^{r+\half-2\epsilon,b-\half+2\epsilon}} \, ,
\end{align}
where $b = \frac{n}{2}-r+\epsilon$ .

The estimate (\ref{70}) follows from Prop. \ref{Prop.1.2} and Prop. \ref{Prop.2}, where we remark that $b >\half$, because $r < \frac{n}{2} - \half$ by assumption. Here the parameters are given by $s_0 = \half - s +2\epsilon$ , $b_0 =0$ , $s_1= s-\half-2\epsilon$ , $b_1=\half+\epsilon$ , $s_2 =r+\half-2\epsilon$ , $b_2= b$ , so that $s_0+s_1+s_2 = r+\half-2\epsilon > \frac{n-1}{2}$ and $s_0+s_1+s_2+s_1+s_2 > \frac{n}{2}$ under our assumption $ r > \frac{n}{2}-1$ .

 (\ref{71}) is by duality equivalent to  
$$\|vw\|_{H^{\half-s+2\epsilon,0}}  \lesssim \|v\|_{H^{r+\half-2\epsilon,b}} \|w\|_{H^{\half-s+2\epsilon,\half-2\epsilon}} \, . $$ 
We use Prop. \ref{Prop.2} and Cor. \ref{Cor.1.1} with parameters $s_0 = \half-s+2\epsilon$ , $b_0 =0$ , $s_1= r+\half-2\epsilon$ , $b_1=b$ , $s_2= \half-s+2\epsilon$ , $b_2=\half-2\epsilon$ , so that $s_0+s_1+s_2 = r + \half-2\epsilon > \frac{n-1}{2}$ and $s_0+s_1+s_2+s_1+s_2 = 2r-s-2\epsilon +\frac{3}{2} > \frac{n}{2}$ under our assumption $2r-s > \frac{n}{2} - \frac{3}{2}$.

The estimate (\ref{72}) is equivalent to
\begin{align}
\nonumber
&\int_* \frac{\widehat{u_1}(\xi_1,\tau_1)}{\langle \xi_1 \rangle^{s-\half-2\epsilon} \langle  |\tau_1| - |\xi_1| \rangle^{\frac{1}{2}+\epsilon}} \frac{\widehat{u_2}(\xi_2,\tau_2) }{\langle \xi_2 \rangle^{r+\half-2\epsilon} \langle |\tau_2| - |\xi_2| \rangle^{b-\frac{1}{2}+2\epsilon}} 
\frac{\widehat{u_3}(\xi_3,\tau_3) \langle \xi_3 \rangle^{s-\half-2\epsilon}}{\langle |\tau_3| - |\xi_3| \rangle^{\frac{1}{2}-2\epsilon}} \cdot\\
\label{34}
& \hspace{20em}  \cdot\, d\xi d\tau \lesssim \prod_{i=1}^3 \|u_i\|_{L^2_{xt}} \, .
\end{align}
The Fourier transforms are nonnegative without loss of generality.
Here * denotes integration over $\sum_{i=1}^3 \xi_i =0$ , $\sum_{i=1}^3 \tau_i =0$ and $d\xi d\tau = d\xi_1 d\xi_2 d\xi_3 d\tau$ . \\
Case 1: $|\xi_3| \le |\xi_1|$ . The left hand side of (\ref{34}) is estimated by
\begin{align*}
&\int_* \frac{\widehat{u_1}(\xi_1,\tau_1)}{ \langle  |\tau_1| - |\xi_1| \rangle^{\frac{1}{2}+\epsilon}} \frac{\widehat{u_2}(\xi_2,\tau_2) }{\langle \xi_2 \rangle^{r+\half-2\epsilon} \langle |\tau_2| - |\xi_2| \rangle^{b-\frac{1}{2}+2\epsilon}} 
\frac{\widehat{u_3}(\xi_3,\tau_3) }{  \langle    |\tau_3| - |\xi_3| \rangle^{\frac{1}{2}-2\epsilon}}   \, d\xi d\tau  \\
&\lesssim \|{\mathcal F}^{-1}(\frac{\widehat{u_1}}{\langle  |\tau_1| - |\xi_1| \rangle^{\frac{1}{2}+\epsilon}})\|_{L^4_t L^2_x} \| {\mathcal F}^{-1}(\frac{\widehat{u_2}}{\langle \xi_2 \rangle^{r+\half-2\epsilon} \langle |\tau_2| - |\xi_2| \rangle^{b-\frac{1}{2}+2\epsilon}})\|_{L^2_t L^{\infty}_x} \cdot\\
&\hspace{10em}\cdot\|{\mathcal F}^{-1}(\frac{\widehat{u_3}}{\langle  |\tau_3| - |\xi_3| \rangle^{\frac{1}{2}-2\epsilon}})\|_{L^4_t L^2_x}  
\lesssim \prod_{i=1}^3 \|u_i\|_{L^2_{xt}} \, .
\end{align*}
For the second factor we interpolate the Strichartz estimate
$$ \|u\|_{L^2_t L^{\infty -}_x} \lesssim \|u\|_{H^{\frac{n-1}{2}+,\half +}} $$
and the Sobolev estimate
$$ \|u\|_{L^2_t L^{\infty -}_x} \lesssim \|u\|_{H^{\frac{n}{2},0}} \, , $$
which gives
$$ \|u\|_{L^2_t L^{\infty -}_x} \lesssim \|u\|_{H^{r+\half-3\epsilon+,\frac{n-1}{2}-r+3\epsilon }} $$
using our assumption $\frac{n}{2}-1 < r < \frac{n-1}{2}$ . This implies immediately by Sobolev
$$ \|u\|_{L^2_t L^{\infty}_x} \lesssim \|u\|_{L^2_t H^{0+,\infty -}_x} \lesssim \|u\|_{H^{r+\half-2\epsilon,b-\half+2\epsilon }} $$
by our choice $b=\frac{n}{2}-r+\epsilon$ , as desired. \\
Case 2: $|\xi_3| \gg |\xi_1|$ , thus $|\xi_2| \sim |\xi_3| \gg |\xi_1|$ . The left hand side of (\ref{34}) is estimated by
\begin{align*}
&\int_* \frac{\widehat{u_1}(\xi_1,\tau_1)}{ \langle \xi_1 \rangle^{s-\half-2\epsilon}\langle  |\tau_1| - |\xi_1| \rangle^{\frac{1}{2}+\epsilon}} \frac{\widehat{u_2}(\xi_2,\tau_2) }{\langle \xi_2 \rangle^{r-s+1} \langle |\tau_2| - |\xi_2| \rangle^{b-\frac{1}{2}+2\epsilon}} 
\frac{\widehat{u_3}(\xi_3,\tau_3) }{\langle |\tau_3| - |\xi_3| \rangle^{\frac{1}{2}-2\epsilon}}   \, d\xi d\tau  \\
&\lesssim\int_* \frac{\widehat{u_1}(\xi_1,\tau_1)}{ \langle \xi_1 \rangle^{r+\half-2\epsilon}\langle  |\tau_1| - |\xi_1| \rangle^{\frac{1}{2}+\epsilon}} \frac{\widehat{u_2}(\xi_2,\tau_2) }{ \langle |\tau_2| - |\xi_2| \rangle^{b-\frac{1}{2}+2\epsilon}} 
\frac{\widehat{u_3}(\xi_3,\tau_3) }{\langle|\tau_3| - |\xi_3| \rangle^{\frac{1}{2}-2\epsilon}}   \, d\xi d\tau  \\
&\lesssim \|{\mathcal F}^{-1}(\frac{\widehat{u_1}}{\langle \xi_1 \rangle^{r+\half-2\epsilon} \langle  |\tau_1| - |\xi_1| \rangle^{\frac{1}{2}+\epsilon}})\|_{L^{2}_t L^{\infty}_x} \| {\mathcal F}^{-1}(\frac{\widehat{u_2}}{\langle |\tau_2| - |\xi_2| \rangle^{\frac{n-1}{2}-r+3\epsilon}})\|_{L^{2+}_t L^2_x} \cdot\\
&\hspace{10em}\cdot\|{\mathcal F}^{-1}(\frac{\widehat{u_3}}{\langle  |\tau_3| - |\xi_3| \rangle^{\frac{1}{2}-2\epsilon}})\|_{L^{\infty -}_t L^2_x}  
\lesssim \prod_{i=1}^3 \|u_i\|_{L^2_{xt}} \, , 
\end{align*}
where we used Strichartz estimate and Sobolev for the first factor similarly as in Case 1 under the condition that $\frac{n}{2}-1  < r  < \frac{n-1}{2}$, and recalling our choice of $b$. This completes the proof of (\ref{28'}). \\
{\bf Proof of (\ref{28}):} We control $Q_{ij}(u,v)$ by (\ref{Qij2}) which reduces (\ref{28}) by symmetry to the following estimates:
\begin{align}
\label{73}
\|uv\|_{H^{r-1-b-,0}} & \lesssim \|u\|_{H^{s-b-,\half+\epsilon}} \|v\|_{H^{s-b-,\half+\epsilon}} \, ,\\
\label{74}
\|uv\|_{H^{r-\frac{3}{2},b-1+}} & \lesssim \|u\|_{H^{s-\half,0}} \|v\|_{H^{s-\half,\half+\epsilon}} \, ,
\end{align}
where $b= \frac{n}{2}-r+\epsilon$ . If we use (\ref{Qij1}) instead  we may replace (\ref{73}) by
\begin{equation}
\label{75}
\|uv\|_{H^{r-\frac{3}{2},b-\half+}} \lesssim \|u\|_{H^{s-\half,\half+\epsilon}} \|v\|_{H^{s-\half,\half+\epsilon}} \, .
\end{equation}

If $n=3$ we prove (\ref{75}) by use of Prop. \ref{Prop.2} (which holds in this case) with parameters
$s_0= \frac{3}{2}-r$ , $b_0 = \half-b-$ , $s_1=s_2=s-\half$ , $b_1=b_2= \half+\epsilon$ . The conditions of Prop. \ref{Prop.2} are satisfied, because $s_0+s_1+s_2 = 2s-r+\half > n-1-r+\epsilon + = \frac{n-1}{2} - b_0$ for $s> \frac{n}{2}-\frac{3}{4}$ . Moreover $s_0+s_1+s_2 > \frac{n}{2} - \half \ge \frac{n+1}{4}$ for $n \ge 3$ and $r < \frac{n}{2} - \half$ , and also $s_0+s_1+s_2+s_1+s_2+b_0 > n-1-r + 2s-1+\half-\frac{n}{2}+r > \frac{3}{2}n-3 \ge \frac{n}{2}$ for $s > \frac{n}{2}-\frac{3}{4}$ and $n \ge 3$ . Furthermore $s_0+s_1+s_2+s_0+s_2+b_1 = 3s-2r+2+\epsilon > \frac{n}{2}$ under our assumptions $s > \frac{n}{2}-\frac{3}{4}$ and $r < \frac{n-1}{2}$ , and finally $s_1+s_2 > - b_0$ $\Leftrightarrow$ $2s+r > \frac{n+1}{2}$ , which holds for $n \ge 3$ .

If $n \ge 4$ we now prove (\ref{73}) by use of Prop. \ref{Prop.1.2} with parameters $s_0 = b+1-r+$, $s_1=s_2= s-b-$ , so that $s_0+s_1+s_2 = 2s-r+1-b- = 2s+1-\frac{n}{2}-\epsilon- > \frac{n-1}{2}$ under our assumption $s > \frac{n}{2}-\frac{3}{4}$ . Moreover $s_1+s_2 = 2s-2b- = 2s-n+2r-2\epsilon- > n - \frac{7}{2} \ge \half$ under our assumptions on $s$ and $r$ , if $n \ge 4$ , so that $s_0+s_1+s_2+s_1+s_2 > \frac{n}{2}$ .

It remains to prove (\ref{74}). First we consider the case $r=\frac{n}{2}-1+\epsilon+$ , so that $b-1+ =0 $ and $\frac{3}{2}-r+2s-1 > \frac{n}{2}$ for $s > \frac{n}{2}-\frac{3}{4}$ . The estimate follows immediately by the Sobolev multiplication law (\ref{SML}). 
Next let $r= \frac{n-1}{2}$ , so that we have to prove
$$\|uv\|_{H^{\frac{n}{2}-2,-\half+\epsilon+}} \lesssim \|u\|_{H^{s-\half,0}} \|v\|_{H^{s-\half,\half+\epsilon}} \, .$$
This follows from Cor. \ref{Cor.1.1} in the case $n \ge 4$ and Prop. \ref{Prop.2} in the case $n=3$ with parameters $s_0=s-\half$ , $s_1=s-\half$ , $s_2= 2-\frac{n}{2}$ , so that $s_0+s_1+s_2=2s+1-\frac{n}{2} > \frac{n-1}{2}$ and $s_1+s_2 > \frac{3}{4}$ . 
The general case $ \frac{n}{2}-1+\epsilon+ < r <\frac{n-1}{2}$ follow by interpolation of these two cases, as one easily checks.

{\bf Proof of (\ref{27}):} 
We first remark that the singularity of $D^{-1}$ is harmless in $ n \ge 3$ dimensions (\cite{T}, Cor. 8.2) and it can be replaced by $\Lambda^{-1}$. As a first step we use Sobolev's multiplication law (\ref{SML}) and obtain
\begin{align*}
\big|\int \int u_1 u_2 u_3 dx dt\big| \lesssim \|u_1\|_{X^{s,\frac{1}{2}+\epsilon}_{\tau =0}}
\|u_2\|_{X^{s,-\frac{1}{2}+\epsilon}_{\tau =0}}
\|u_3\|_{X^{1-l,\frac{1}{2}-\epsilon}_{\tau =0}} 
\end{align*}
provided that $l < 2s-\frac{n}{2}+1$ and $ s \ge l-1  $ ,  which is fulfilled under our assumptions. This implies taking the time derivative into account 
\begin{align}
\label{40}
&\| \Lambda^{-1} (\phi_1 \partial_t \phi_2)\|_{X^{l,-\frac{1}{2}+\epsilon}_{\tau=0}} \lesssim \|\phi_1\|_{X^{s,\frac{1}{2}+\epsilon}_{\tau=0}} \|\phi_2\|_{X^{s,\frac{1}{2}+\epsilon}_{\tau=0}} \, .
\end{align}
In a second step we want to prove
\begin{align}
\label{41}
&\| \Lambda^{-1} (\phi_1 \partial_t \phi_2)\|_{X^{l,-\frac{1}{2}+\epsilon}_{\tau=0}} + \| \Lambda^{-1} (\phi_2 \partial_t \phi_1)\|_{X^{l,-\frac{1}{2}+\epsilon}_{\tau=0}} 
\lesssim \|\phi_1\|_{X^{s,\frac{1}{2}+\epsilon}_{|\tau|=|\xi|}} \|\phi_2\|_{X^{s,\frac{1}{2}+\epsilon}_{\tau=0}} \, .
\end{align}
If $\widehat{\phi_1}(\xi_3,\tau_3)$ is supported in  $ ||\tau_3|-|\xi_3|| \gtrsim |\xi_3| $ we have the trivial bound
\begin{equation}
\label{41'}
 \|\phi_1\|_{X^{s,\frac{1}{2}+\epsilon}_{\tau=0}} \lesssim \|\phi_1\|_{X^{s,\frac{1}{2}+\epsilon}_{|\tau|=|\xi|}} \,, 
\end{equation}
so that (\ref{41}) follows from (\ref{40}).
Assuming from now on $||\tau_3|-|\xi_3|| \ll |\xi_3|$ we have to prove
\begin{equation}
\label{42}
 \int_* m(\xi_1,\xi_2,\xi_3,\tau_1,\tau_2,\tau_3) \prod_{i=1}^3 \widehat{u}_i(\xi_i,\tau_i) d\xi d\tau \lesssim \prod_{i=1}^3 \|u_i\|_{L^2_{xt}} 
 \end{equation}
where
$$ m= \frac{(|\tau_2|+|\tau_3|) \chi_{||\tau_3|-|\xi_3|| \ll |\xi_3|}}{\langle \xi_1 \rangle^{1-l} \langle \tau_1 \rangle^{\frac{1}{2}-\epsilon} \langle \xi_2 \rangle^{s} \langle \tau_2 \rangle^{\frac{1}{2}+\epsilon} \langle \xi_3 \rangle^s \langle |\tau_3|-|\xi_3|\rangle^{\frac{1}{2}+\epsilon}} \, . $$
Since $\langle \tau_3 \rangle \sim \langle \xi_3 \rangle$ and $\tau_1+\tau_2+\tau_3=0$ we have 
\begin{equation}
\label{43}
|\tau_2| + |\tau_3| \lesssim \langle \tau_1 \rangle^{\frac{1}{2}-\epsilon} \langle \tau_2 \rangle^{\frac{1}{2}+\epsilon} +\langle \tau_1 \rangle^{\frac{1}{2}-\epsilon} \langle \xi_3 \rangle^{\frac{1}{2}+\epsilon} +\langle \tau_2 \rangle^{\frac{1}{2}+\epsilon} \langle \xi_3 \rangle^{\frac{1}{2}-\epsilon} \, .
\end{equation}
For the first term on the r.h.s. we have to show
$$
\big|\int\int uvw dx dt\big| \lesssim \|u\|_{X^{1-l,0}_{\tau=0}} \|v\|_{X^{s,0}_{\tau=0}} \|w\|_{X^{s,\frac{1}{2}+\epsilon}_{|\tau|=|\xi|}} \, ,
$$
which follows from Sobolev's multiplication law (\ref{SML}). For the other terms we use $l \ge 1$ so that $\langle \xi_1 \rangle^{l-1} \lesssim \langle \xi_2 \rangle^{l-1} + \langle \xi_3 \rangle^{l-1}$ and the second term on the r.h.s. reduces to the following estimates:
\begin{align}
\label{47}
\big|\int\int uvw dx dt\big| &\lesssim \|u\|_{X^{0,0}_{\tau=0}} \|v\|_{X^{s+1-l,\frac{1}{2}+\epsilon}_{\tau=0}} \|w\|_{X^{s-\frac{1}{2}-\epsilon,\frac{1}{2}+\epsilon}_{|\tau|=|\xi|}} \\
\label{48}
\big|\int\int uvw dx dt\big| &\lesssim \|u\|_{X^{0,0}_{\tau=0}} \|v\|_{X^{s,\frac{1}{2}+\epsilon}_{\tau=0}} \|w\|_{X^{s-l+\frac{1}{2}-\epsilon,\frac{1}{2}+\epsilon}_{|\tau|=|\xi|}} \, .
\end{align}
First we prove (\ref{47}). We obtain
\begin{align}
\label{47'}
\big|\int\int uvw dx dt\big| &\lesssim \|u\|_{L_x^2 L_t^2} \|v\|_{L_x^q L_t^{\infty}} \|w\|_{L_x^p L_t^2} \, .
\end{align}
Here $\frac{1}{p} = \frac{n-1}{2(n+1)} - \frac{m}{n}$ , where $m = s-\half-\frac{n-1}{2(n+1)} - \epsilon$ and $\frac{1}{q} = \half - \frac{1}{p}$ , so that by Sobolev $H_x^{m,\frac{2(n+1)}{n-1}} \hookrightarrow L_x^p$ . This implies by Prop. \ref{Lemma} 
$$ \|w\|_{L_x^p L_t^2} \lesssim \|w\|_{H^{m,\frac{2(n-1)}{n+1}}_x L^2_t} \lesssim \|w\|_{H^{m+\frac{n-1}{2(n+1)},\half+\epsilon}} = \|w\|_{H^{s-\half-\epsilon,\half+\epsilon}} \, . $$
Moreover an easy calculation shows that $\frac{1}{q} \ge \half - \frac{s+1-l}{n} \, \Leftrightarrow \, l \le 2s-\frac{n}{2}+1-\epsilon$ , which holds by assumption., so that by Sobolev $H^{s+1-l}_x \hookrightarrow L^q_x$ and thus 
$$ \|v\|_{L^q_x L^{\infty}_t} \lesssim \|v\|_{X^{s+1-l,\half+\epsilon}_{\tau =0}} \, . $$
This implies (\ref{47}).

Next we prove (\ref{48}). We apply (\ref{47'}) with the choice $\frac{1}{p} = \half - \frac{s}{n}$ and $\frac{1}{q}=\frac{s}{n}$ . This implies by Sobolev $H^{k,\frac{2(n+1)}{n-1}}_x \hookrightarrow L^q_x$ , if $k= \frac{n(n-1)}{2(n+1)} -s$ . One easily checks that $k+\frac{n-1}{2(n+1)} \le s-l+\half-\epsilon \, \Leftrightarrow \, l \le 2s-\frac{n}{2}+1-\epsilon$ , which we assumed. Consequently by Prop. \ref{Lemma} we obtain
$$ \|w\|_{L^q_x L^2_t} \lesssim  \|w\|_{H^{k,\frac{2(n+1)}{n-1}}_x L^2_t} \lesssim \|w\|_{H^{k+\frac{n-1}{2(n+1)},\half+\epsilon}} \lesssim \|w\|_{H^{s-l+\half,\half+\epsilon}} \ $$
and by Sobolev $\|v\|_{L^p_x L^{\infty}_t} \lesssim \|v\|_{X^{s,\half+\epsilon}_{\tau =0}}$ , so that (\ref{48}) is proven.

For the third term on the r.h.s. of (\ref{43}) we have to show
\begin{align*}
\big|\int\int uvw dx dt\big| &\lesssim \|u\|_{X^{0,\frac{1}{2}-\epsilon}_{\tau=0}} \|v\|_{X^{s+1-l,0}_{\tau=0}} \|w\|_{H^{s-\frac{1}{2}+\epsilon,\frac{1}{2}+\epsilon}} \\
\big|\int\int uvw dx dt\big| &\lesssim \|u\|_{X^{0,\frac{1}{2}-\epsilon}_{\tau=0}} \|v\|_{X^{s,0}_{\tau=0}} \|w\|_{H^{s-l+\frac{1}{2}+\epsilon,\frac{1}{2}+\epsilon}} \, .
\end{align*}
The first estimate follows from
$$\big|\int\int uvw dx dt\big| \lesssim \|u\|_{L_x^2 L_t^{\infty-}} \|v\|_{L_x^q L_t^2} \|w\|_{L_x^p L_t^{2+}}  $$
 exactly as for the proof of (\ref{47}) with (up to an $\epsilon$) the same parameters.
The second estimate follows similarly by choosing $p$ and $q$ (up to an $\epsilon$) as for the proof of (\ref{48}).

We now come to the proof of (\ref{27}) and remark that we may assume now that both functions $\phi_1$ and $\phi_2$ are supported in $||\tau|-|\xi|| \ll |\xi|$ , because otherwise (\ref{27}) is an immediate consequence of (\ref{41}) and (\ref{41'}). Thus (\ref{27}) follows if we can prove the following estimate:
$$ \int_* m(\xi_1,\xi_2,\xi_3,\tau_1,\tau_2,\tau_3) \prod_{i=1}^3 \widehat{u}_i(\xi_i,\tau_i) d\xi d\tau \lesssim \prod_{i=1}^3 \|u_i\|_{L^2_{xt}} \, , $$
where
$$m= \frac{|\tau_3|\chi_{||\tau_2|-|\xi_2|| \ll |\xi_2|} \chi_{||\tau_3|-|\xi_3|| \ll |\xi_3|}}{\langle \xi_1 \rangle^{1-l} \langle \tau_1 \rangle^{\frac{1}{2}-\epsilon} \langle \xi_2 \rangle^s \langle |\tau_2|-|\xi_2| \rangle^{\frac{1}{2}+\epsilon} \langle \xi_3 \rangle^s \langle |\tau_3|-|\xi_3|\rangle^{\frac{1}{2}+\epsilon}} \, . $$
Since $\langle \tau_3 \rangle \sim \langle \xi_3 \rangle$ , $\langle \tau_2 \rangle \sim \langle \xi_2 \rangle$ and $\tau_1+\tau_2+\tau_3=0$ we obtain
$$
|\tau_3| \lesssim \langle \tau_1 \rangle^{\frac{1}{2}-\epsilon} \langle \xi_3 \rangle^{\frac{1}{2}+\epsilon} +\langle \xi_2 \rangle^{\frac{1}{2}-\epsilon} \langle \xi_3 \rangle^{\frac{1}{2}+\epsilon} \, . $$
The first term is taken care of by the estimate
$$\big|\int \int uvw dx dt\big| \lesssim \|u\|_{X^{1-l,0}_{\tau=0}} \|v\|_{H^{s,\frac{1}{2}+\epsilon}} \|w\|_{H^{s-\frac{1}{2}-\epsilon,\frac{1}{2}+\epsilon}} \, ,$$
which is equivalent to
$$\|vw\|_{H^{l-1,0}} \lesssim \|v\|_{H^{s,\half+\epsilon}} \|w\|_{H^{s-\half-\epsilon,\half+\epsilon}} \, .$$
This is true by Prop. \ref{Prop.1.2} and Prop. \ref{Prop.2} with the parameters $s_0=1-l$ , $s_1=s$ , $s_2=s-\half-\epsilon$ , so that $s_0+s_1+s_2 = 2s-l+\half-\epsilon > \frac{n-1}{2} \ge\frac{n+1}{4}$ under our assumption $ l < 2s-\frac{n}{2}+1$ , and also $s_1+s_2 > \half$ . \\
In order to treat the second term on the right hand side we assume w.l.o.g. $|\xi_2| \ge |\xi_3|$, so that $\langle \xi_1 \rangle^{l-1} \lesssim \langle \xi_2 \rangle^{l-1}$ , so that it suffices to show:
$$\big|\int \int uvw dx dt\big| \lesssim \|u\|_{X^{0,\half-\epsilon}_{\tau=0}} \|v\|_{H^{s-l+\half,\frac{1}{2}+\epsilon}} \|w\|_{H^{s-\frac{1}{2},\frac{1}{2}+\epsilon}} \, ,$$
This is shown as follows:
$$\big|\int \int uvw dx dt\big| \lesssim \|u\|_{L^2_x L^{\infty-}_t} \|v\|_{L^{q_1}_x L^2_t} \|w\|_{L^{q_2}_x L^{2+}_t} \, .$$
We choose $q_1$ such that $\frac{1}{q_1}\ge \frac{n-1}{2(n+1)} - \frac{k_1}{n}$ with $k_1 + \frac{n-1}{2(n+1)} = s-l+\half$ , which is equivalent to $\frac{1}{q_1} \ge \frac{n-1}{2n}- \frac{s-l+\half}{n}$ . This implies $H^{k_1,\frac{2(n+1)}{n-1}} \hookrightarrow L^{q_1}$, so that by Prop. \ref{Lemma} we obtain
$$\|v\|_{L^{q_1}_x L^2_t} \lesssim \|v\|_{H^{k_1,\frac{2(n+1)}{n-1}}_x L^2_t} \lesssim \|v\|_{H^{s-l+\half,\half+\epsilon}} \, . $$
Moreover we want to choose  $q_2$ such that $\frac{1}{q_2}\ge \frac{n-1}{2(n+1)} - \frac{k_2}{n}$ with $k_2 + \frac{n-1}{2(n+1)} < s-\half$, which means that $\frac{1}{q_2} > \frac{n-1}{2n}- \frac{s-\half}{n}$ , thus as before
$$\|v\|_{L^{q_2}_x L^{2+}_t} \lesssim \|v\|_{H^{k_2,\frac{2(n+1)}{n-1}}_x L^{2+}_t} \lesssim \|v\|_{H^{s-\half,\half+\epsilon}} \, . $$
This choice of the parameters $q_1$ and $q_2$ is possible, if 
$\half = \frac{1}{q_1}+\frac{1}{q_2} > \frac{n-1}{n} - \frac{2s-l}{n}$ , which is equivalent to
our assumption $l < 2s-\frac{n}{2}+1$ . 

This completes the proof of (\ref{27}). \\[0.5em]
{\bf Proof of (\ref{29}):} This proof is similar to a related estimate for the Yang-Mills equation given by Tao \cite{T1}. We have to show
$$
\int_* m(\xi_1,\xi_2,\xi_3,\tau_1,\tau_2,\tau_3) \prod_{i=1}^3 \widehat{u}_i(\xi_i,\tau_i)  d\xi d\tau \lesssim \prod_{i=1}^3 \|u_i\|_{L^2_{xt}} \, , 
$$
where 
$$ m = \frac{(|\xi_2|+|\xi_3|) \langle \xi_1 \rangle^{s-1} }{\langle |\tau_1|-|\xi_1|) \rangle^{\frac{1}{2}-2\epsilon}\langle \xi_2 \rangle^s \langle |\tau_2| - |\xi_2|\rangle^{\frac{1}{2}+\epsilon}  \langle \xi_3 \rangle^l \langle \tau_3 \rangle^{\frac{1}{2}+\epsilon-}} \, .$$
Case 1: $|\xi_2| \lesssim |\xi_1|$ ($\Rightarrow$ $|\xi_2|+|\xi_3| \lesssim |\xi_1|$). \\
We ignore the factor $\langle |\tau_1| - |\xi_1| \rangle^{\frac{1}{2}-2\epsilon}$ and use the averaging principle (\cite{T}, Prop. 5.1) to replace $m$ by
$$ m' = \frac{ \langle \xi_1 \rangle^s \chi_{||\tau_2|-|\xi_2||\sim 1} \chi_{|\tau_3| \sim 1}}{ \langle \xi_2 \rangle^s \langle \xi_3 \rangle^l} \, . $$
Let now $\tau_2$ be restricted to the region $\tau_2 =T + O(1)$ for some integer $T$. Then $\tau_1$ is restricted to $\tau_1 = -T + O(1)$, because $\tau_1 + \tau_2 + \tau_3 =0$, and $\xi_2$ is restricted to $|\xi_2| = |T| + O(1)$. The $\tau_1$-regions are essentially disjoint for $T \in {\mathbb Z}$ and similarly the $\tau_2$-regions. Thus by Schur's test (\cite{T}, Lemma 3.11) we only have to show
\begin{align*}
 &\sup_{T \in {\mathbb Z}} \int_* \frac{\langle \xi_1 \rangle^s \chi_{\tau_1=-T+O(1)} \chi_{\tau_2=T+O(1)} \chi_{|\tau_3|\sim 1} \chi_{|\xi_2|=|T|+O(1)}}{\langle \xi_2 \rangle^s  \langle \xi_3 \rangle^l} \prod_{i=1} \widehat{u}_i(\xi_i,\tau_i)  d\xi d\tau  \\
 & \hspace{25em} \lesssim \prod_{i=1}^3 \|u_i\|_{L^2_{xt}} \, . 
\end{align*}
The $\tau$-behaviour of the integral is now trivial, thus we reduce to
\begin{equation}
\label{55}
\sup_{T \in {\mathbb N}} \int_{\sum_{i=1}^3 \xi_i =0}  \frac{ \langle \xi_1 \rangle^s \chi_{|\xi_2|=T+O(1)}}{ \langle T \rangle^s  \langle \xi_3 \rangle^{l}} \widehat{f}_1(\xi_1)\widehat{f}_2(\xi_2)\widehat{f}_3(\xi_3)d\xi \lesssim \prod_{i=1}^3 \|f_i\|_{L^2_x} \, .
\end{equation}
It only remains to consider the following two cases: \\
Case 1.1: $|\xi_1| \sim |\xi_3| \gtrsim T$. We have to show 
$$
\sup_{T \in {\mathbb N}} \int_{\sum_{i=1}^3 \xi_i =0}  
\frac{\chi_{|\xi_2|=T+O(1)}}{ T^l} \widehat{f}_1(\xi_1)\widehat{f}_2(\xi_2)\widehat{f}_3(\xi_3)d\xi \lesssim \prod_{i=1}^3 \|f_i\|_{L^2_x} \, . $$
The l.h.s. is bounded by
\begin{align*} 
& \sup_{T \in{\mathbb N}} \frac{1}{T^l} \|f_1\|_{L^2} \|f_3\|_{L^2} \| {\mathcal F}^{-1}(\chi_{|\xi|=T+O(1)} \widehat{f}_2)\|_{L^{\infty}({\mathbb R}^3)} \\
&\lesssim \sup_{T \in{\mathbb N}} \frac{1}{ T^l} 
\|f_1\|_{L^2} \|f_3\|_{L^2} \| \chi_{|\xi|=T+O(1)} \widehat{f}_2\|_{L^1({\mathbb R}^3)} \\
&\lesssim \hspace{-0.1em}\sup_{T \in {\mathbb N}} 
\frac{T^{\frac{n-1}{2}}}{T^l}  \prod_{i=1}^3 \|f_i\|_{L^2} \lesssim \hspace{-0.1em}
\prod_{i=1}^3 \|f_i\|_{L^2}
\end{align*}
for $l \ge \frac{n-1}{2}$ . \\
Case 1.2: $|\xi_1| \sim T \gtrsim |\xi_3|$. 
In this case it suffices to show
$$
\sup_{T \in {\mathbb N}} \int_{\sum_{i=1}^3 \xi_i =0}  \frac{\chi_{|\xi_2|=T+O(1)}}{ \langle \xi_3 \rangle^{l}} \widehat{f}_1(\xi_1)\widehat{f}_2(\xi_2)\widehat{f}_3(\xi_3)d\xi \lesssim \prod_{i=1}^3 \|f_i\|_{L^2_x} \, .
$$
 An elementary calculation shows  that the l.h.s. is bounded by
\begin{align*}
 \sup_{T \in{\mathbb N}} \| \chi_{|\xi|=T+O(1)} \ast \langle \xi \rangle^{-2l}\|^{\frac{1}{2}}_{L^{\infty}(\mathbb{R}^3)} \prod_{i=1}^3 \|f_i\|_{L^2_x} \lesssim \prod_{i=1}^3 \|f_i\|_{L^2_x} \, ,
\end{align*}
using that $l > \frac{n-1}{2}$ .

The proof of (\ref{29}) is complete. \\[0.5em]
{\bf Proof of (\ref{30}):}  We estimate by Sobolev's multiplication law (\ref{SML}), Prop. \ref{Prop.2} and Prop. \ref{Prop.1.2} , using $s> \frac{n}{2}-\frac{3}{4}$ :
\begin{align*}
&\| A \phi_1 \phi_2 \|_{H^{r-1,\frac{n}{2}-1-r+2\epsilon}} \lesssim \|A \phi_1 \phi_2\|_{L^2_t H^{r-1}_x} \lesssim \|A\|_{L^{\infty}_t H^r_x} \|\phi_1 \phi_2\|_{L^2_t H^{\frac{n}{2}-1+}_x} \\
& \lesssim \|A\|_{H^{r,\frac{1}{2}+\epsilon}} \|\phi_1\|_{H^{s,\frac{1}{2}+\epsilon}}
\|\phi_2\|_{H^{s,\frac{1}{2}+\epsilon}} \, .
\end{align*}
Similarly we also obtain for $r \le \frac{n-1}{2}$ , $l > \frac{n-1}{2}$ and $s > \frac{n}{2}-\frac{3}{4}$ :
\begin{align*}
&\|A \phi_1 \phi_2\|_{L^2_t H^{r-1}_x} 
\lesssim \|A\|_{L^{\infty}_t H^{\frac{n-1}{2}}_x}  \|\phi_1 \phi_2\|_{L^2_t H^{r-\half+}_x}\\
 & \lesssim \| A\|_{X^{l,\frac{1}{2}+\epsilon-}_{\tau=0}} \|\phi_1\|_{H^{s,\frac{1}{2}+\epsilon}} \|\phi_2\|_{H^{s,\frac{1}{2}+\epsilon}} \, .
\end{align*}
{\bf Proof of (\ref{30'}):}  By Sobolev's multiplication rule (\ref{SML}) and $ l \ge \frac{n-1}{2}$ we obtain 
\begin{align*}
\|A_1 A_2 \phi\|_{L^2_t H^{s-1}_x} &\lesssim \|A_1 A_2\|_{L^2_t H^{\frac{n}{2}-1+}_x} \|\phi\|_{L^{\infty}_t H^s_x} \\
& \lesssim \|A_1\|_{L^4_t H^{\frac{n}{2}-\half+}_x }
					\|A_2\|_{L^4_t H^{\frac{n}{2}-\half+}_x} 
					\|\phi\|_{L^{\infty}_t H^s_x} \\
& \lesssim \| A_1\|_{X^{l,\frac{1}{2}+\epsilon-}_{\tau=0}} \|  A_2\|_{X^{l,\frac{1}{2}+\epsilon-}_{\tau=0}} \|\phi\|_{H^{s,\frac{1}{2}+\epsilon}} \, .
\end{align*}
Next for $\frac{n}{2}-\half > r > \frac{n}{2}-1$ Prop. \ref{Prop.2} or Prop. \ref{Prop.1.2} imply :
\begin{align*}
\|A_1 A_2 \phi\|_{X^{s-1,-\frac{1}{2}+}_{|\tau|=|\xi|}} & \lesssim \|A_1 A_2\|_{H^{r-\half+,0}}  \|\phi\|_{H^{s,\frac{1}{2}+}}  \\
& \lesssim
\|  A_1\|_{H^{\frac{n}{2}-1+,\frac{1}{2}+}} \| A_2\|_{H^{\frac{n}{2}-1+,\frac{1}{2}+}} \|\phi\|_{H^{s,\frac{1}{2}+}}\\
&\lesssim 
\|  A_1\|_{H^{r,\frac{n}{2}-r}} \| A_2\|_{H^{r,\frac{n}{2}-r+}} \|\phi\|_{H^{s,\frac{1}{2}+}}\, .
\end{align*}
Finally we also obtain
\begin{align*}
\|A_1 A_2\|_{H^{r-\half+,0}}
&\lesssim \| A_1\|_{L^{\infty}_t H^r_x} \|A_2\|_{L^2_t H^l_x} \lesssim
\|  A_1\|_{H^{r,\frac{1}{2}+}} \| A_2\|_{X^{l,\frac{1}{2}+}_{\tau=0}} \\
&\lesssim 
\|  A_1\|_{H^{r,\frac{n}{2}-r}} \| A_2\|_{X^{l,\frac{1}{2}+}_{\tau=0}}\, ,
\end{align*}
by (\ref{SML}) under our assumptions $l > \frac{n-1}{2}$ and $r < \frac{n}{2}-\half$ .

This completes the proof of (\ref{30'}) and  part 1 of Proposition \ref{Theorem}. 
\end{proof}

Now we eliminate the assumption $a_0^{cf} = 0$ .
\begin{proof}[{\bf Proof of Theorem \ref{Theorem'}}]
We use Proposition \ref{Theorem} to construct a unique solution $(\phi',A')$ of the Cauchy problem for 
(\ref{1.11}),(\ref{1.12}),(\ref{1.13}) with initial conditions
$ \phi'(0)= e^{-i \chi} \phi_0$ , $(\partial_t \phi)(0) = e^{-i \chi} \phi_1$ , $A'(0) = a_0^{df}$ , $(\partial_t A)(0)= a_1$ , where $a_0 \in H^r$ , $a_1 \in H^{r-1}$ , $\phi_0 \in H^s$ , $\phi_1 \in H^{s-1}$ and the compatibility condition (\ref{CC}) is satisfied.
Here $\chi := -(-\Delta)^{-1}  div \,a_0$  is chosen such that $\nabla \chi = a^{cf}(0)$. The assumptions for the data in Prop. \ref{Theorem} are now shown to be satisfied. It is immediately clear that $A'^{cf}(0) = 0$ and also $A'(0) \in H^r$,  $(\partial_t A')(0) \in H^{r-1}$ . In order to show the regularity of the data for $\phi'$ we start with the estimate
$$ \|\nabla( uv)\|_{H^r} \le c_1 \|\nabla u \|_{H^r} \|\nabla v \|_{H^r} \, , $$
which holds for $r>\frac{n}{2}-1$ by (a variant of) (\ref{SML}). This implies
\begin{align}
\label{L}
\| \nabla (e^{i\chi}) \|_{H^r} & = \| \nabla(\sum_{k=0}^{\infty} \frac{(i \chi)^k}{k !})  \|_{H^r} \le  \sum_{k=0}^{\infty} \frac{1}{k!}\|\nabla(\chi^k)\|_{H^r} \\
\nonumber
&\le \sum_{k=0}^{\infty} \frac{c_1^{k-1} \|\nabla \chi\|_{H^r}^k}{k!} = c_1^{-1}\exp(c_1 \|\nabla \chi\|_{H^r})  < \infty \, .
\end{align}
Thus by (\ref{SML}) using $r>\frac{n}{2}-1$ :
$$ \|\phi'(0) \|_{H^s} = \|e^{i\chi} \phi_0\|_{H^s} \lesssim \|\nabla(e^{i\chi})\|_{H^r} \|\phi_0\|_{H^s} < \infty $$
and similarly also $(\partial_t \phi')(0) \in H^{s-1}$. The compatibility condition is also preserved, as one easily shows.

Consider now the gauge transformation
$$A'_{\mu} \to A_{\mu} = A'_{\mu} + \partial_{\mu} \chi \, , \, \phi' \to \phi = e^{i \chi} \phi' \, , \, D'_{\mu} \to D_{\mu} = \partial_{\mu} + i A_{\mu} \, .$$
It certainly preserves the temporal gauge, because $\chi$ is independent of the time.
This leads to a solution $(A,\phi)$ of (\ref{1.11}),(\ref{1.12}),(\ref{1.13}) with initial conditions
$A(0) = a_0^{df} + \nabla \chi = a_0^{df} + a_0^{cf} = a_0$ , $(\partial_t A)(0) = a_1$ , $\phi(0) = \phi_0$ , $(\partial_t \phi)(0) = \phi_1$ . What remains to be shown is that the regularity of the solution is preserved. It is easy to see that $A$ has the same regularity as $A'$.
Let now $\psi$ be a smooth function with $\psi(t) =1$ for $0\le t \le T$ and $\psi(t) = 0$ for $t \ge 2T$. By Lemma \ref{Lemma4.3} below and (\ref{L}) we obtain for $s>\frac{n}{2}-\frac{3}{4}$ and $r > \frac{n}{2}-1$ :
\begin{align*}
\|e^{i\chi} \phi'_{\pm}\|_{X^{s,\frac{1}{2}+\epsilon}_{\pm} [0,T]} &
\lesssim \|\nabla(e^{i\chi}) \psi\|_{X^{r,\frac{1}{2}+\epsilon}_{\pm}} 
\|\phi'_{\pm}\|_{X^{s,\frac{1}{2}+\epsilon}_{\pm}[0,T]} \\
&\lesssim \|\nabla(e^{i\chi})\|_{H^r} \|\phi'_{\pm}\|_{X^{s,\frac{1}{2}+\epsilon}_{\pm}[0,T]} \\
& \lesssim c_1^{-1}\exp(c_1 \|a_0^{cf}\|_{H^r})  \|\phi'_{\pm}\|_{X^{s,\frac{1}{2}+\epsilon}_{\pm}[0,T]} < \infty \, ,
\end{align*}
so that the regularity of $\phi$ is also preserved.
\end{proof}
In the last proof we used the following lemma.
\begin{lemma}
\label{Lemma4.3}
The following estimate holds for $r+1 \ge s>\frac{n}{2}-\frac{3}{4}$ , $r > \frac{n}{2}-1$ and $\epsilon >0$ sufficiently small:
$$ \|uv\|_{X^{s,\frac{1}{2}+\epsilon}_{\pm}} \lesssim \|\nabla u\|_{X^{r,\frac{1}{2}+\epsilon}_{\pm}} \|v\|_{X^{s,\frac{1}{2}+\epsilon}_{\pm}} \, . $$
\end{lemma}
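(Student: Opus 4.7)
The plan is to deduce Lemma~\ref{Lemma4.3} from the bilinear wave-Sobolev estimates of Section~2 via a standard Moser-type argument, proceeding in three steps.

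\emph{Step 1 (reduction on the right-hand side).} I first replace $\|\nabla u\|_{X^{r,\frac{1}{2}+\epsilon}_\pm}$ by $\|u\|_{X^{r+1,\frac{1}{2}+\epsilon}_\pm}$, modulo harmless low-frequency contributions. Splitting $u = P_{\le 1} u + P_{\ge 1} u$ with a Paley--Littlewood cutoff, on the high-frequency part $\langle\xi\rangle \sim |\xi|$, hence $\|P_{\ge 1} u\|_{X^{r+1,\frac{1}{2}+\epsilon}_\pm} \lesssim \|\nabla u\|_{X^{r,\frac{1}{2}+\epsilon}_\pm}$. For the low-frequency part, Bernstein's inequality yields $\|P_{\le 1} u\|_{L^\infty_{t,x}} \lesssim \|\nabla P_{\le 1} u\|_{L^2_{t,x}} \lesssim \|\nabla u\|_{X^{r,\frac{1}{2}+\epsilon}_\pm}$, and multiplication by a function frequency-localised to $|\xi|\le 1$ is bounded on $X^{s,b}_\pm$.

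\emph{Step 2 (the core $H^{s,b}$ estimate).} Next I prove
\[
\|uv\|_{H^{s,\frac{1}{2}+\epsilon}} \lesssim \|u\|_{H^{r+1,\frac{1}{2}+\epsilon}} \|v\|_{H^{s,\frac{1}{2}+\epsilon}}
\]
in dimension $n=3$ by applying Proposition~\ref{Prop.2} with parameters $s_0 = -s$, $s_1 = r+1$, $s_2 = s$, $b_0 = -\frac{1}{2}-\epsilon$, $b_1 = b_2 = \frac{1}{2}+\epsilon$. The decisive conditions reduce to $r+1 > \frac{n}{2}+\epsilon$ (from $r > \frac{n}{2}-1$ strictly), $s + 2r + \frac{3}{2} - \epsilon > \frac{n}{2}$ (from $s > \frac{n}{2}-\frac{3}{4}$, $r > \frac{n}{2}-1$ and $n \ge 3$), and $r+1 \ge s$. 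For $n \ge 4$, where only the special case recorded as Proposition~\ref{Prop.1.2} is available, I first split the output modulation weight dyadically and invoke Proposition~\ref{Prop.1.2} in the high-modulation regime (reducing $b_0$ to $0$ at the cost of an absorbable spatial shift), the angular loss being controlled by the slack $r+1 > \frac{n}{2}$.

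\emph{Step 3 (transfer to $X^{s,b}_\pm$ on the left-hand side).} Since $\|w\|_{X^{s,b}_\pm} \ge \|w\|_{H^{s,b}}$ for $b > 0$, Step~2 is not directly sufficient. I decompose $u = u^+ + u^-$ and $v = v^+ + v^-$ according to the sign of the time Fourier variable. For the pieces $u^\alpha v^\beta$ whose output Fourier support has $\tau$ of the sign matching $\pm$, one has $\langle\tau \pm |\xi|\rangle = \langle|\tau| - |\xi|\rangle$ pointwise and Step~2 applies verbatim. For the ``wrong-sign'' pieces, $\langle\tau \pm|\xi|\rangle \lesssim \langle|\tau|-|\xi|\rangle + \langle\xi\rangle$, and the resulting extra factor $\langle\xi\rangle^{\frac{1}{2}+\epsilon}$ is absorbed by reapplying Proposition~\ref{Prop.2} (resp.~\ref{Prop.1.2}) with the spatial exponent $s_1 = r + \frac{1}{2}-\epsilon$ in place of $r+1$; the strict inequalities $r > \frac{n}{2}-1$ and $s > \frac{n}{2}-\frac{3}{4}$ in the hypotheses are precisely enough to preserve the conditions of the proposition.

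The main obstacle is Step~3: the $X^{s,b}_\pm$-versus-$H^{s,b}$ asymmetry for $b > 0$ is what makes a direct appeal to the wave-Sobolev machinery insufficient, and the bookkeeping of the shifted parameters in the mismatched-sign contributions requires care. It is, however, routine once one has the slack built into the lemma's hypotheses.
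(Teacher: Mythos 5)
Your overall architecture (replace $\nabla$ by $\Lambda$, then distribute the output weight and invoke the bilinear wave--Sobolev estimates with a $\half+\epsilon$ spatial shift landing on $u$) is the right one, and your Step~2 for $n=3$ checks out against Prop.~\ref{Prop.2}. But Step~3 has a genuine gap. In the wrong-sign region you bound $\langle \tau\pm|\xi|\rangle \lesssim \langle|\tau|-|\xi|\rangle + \langle\xi\rangle$ with $\xi=\xi_1+\xi_2$ the \emph{output} frequency, so the extra factor is of size $\max(|\xi_1|,|\xi_2|)^{\half+\epsilon}$. Your proposed absorption --- replacing $s_1=r+1$ by $r+\half-\epsilon$ --- only disposes of a factor $\langle\xi_1\rangle^{\half+\epsilon}$ and therefore fails in the high-low interaction $|\xi_2|\gg|\xi_1|$, where the loss sits on $v$; putting it on $v$ would require controlling $\|v\|_{H^{s+\half+\epsilon,\half+\epsilon}}$, which you do not have (and Prop.~\ref{Prop.2} with $s_0=-s-\half-\epsilon$, $s_2=s$ then violates $s_0+s_2\ge 0$). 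One can rescue this regime by exploiting that a wrong-sign output forces at least one input to carry $\langle\tau_i\pm|\xi_i|\rangle\gtrsim\langle\xi_2\rangle$ (splitting further according to the size of the modulation of $v$), but none of that is in your sketch. Your Step~2 for $n\ge 4$ is also only a gesture: Prop.~\ref{Prop.1.2} has $b_0=0$ fixed, and ``reducing $b_0$ to $0$ at the cost of an absorbable spatial shift'' is precisely the nontrivial decomposition you would need to spell out.

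The paper avoids all of this with one exact inequality valid for every sign configuration,
$$ \big|(\tau_1+\tau_2)\mp|\xi_1+\xi_2|\big| \le |\tau_1\mp|\xi_1||+|\tau_2\mp|\xi_2||+2\min(|\xi_1|,|\xi_2|)\,, $$
which follows from $|\xi_1|+|\xi_2|-|\xi_1+\xi_2|\le 2\min(|\xi_1|,|\xi_2|)$. The point is the $\min$: the spatial loss can always be charged to $u$, giving $s_1=r+\half-\epsilon$ uniformly, with no sign decomposition at all. This reduces the $X^{s,\half+\epsilon}_\pm$ bound in one stroke to three estimates with output modulation exponent $0$ (where $X^{s,0}_\pm=H^{s,0}$), two of which are plain Sobolev multiplication and the third of which is your shifted estimate, handled by Prop.~\ref{Prop.2} for $n=3$ and Prop.~\ref{Prop.1.2} for $n\ge4$. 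I recommend you replace your Steps~2--3 by this single decomposition; it eliminates both the sign case analysis and the separate $n\ge 4$ discussion.
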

\begin{proof}
By Tao \cite{T}, Cor. 8.2 we may replace $\nabla$ by $\Lambda$ so that it suffices to prove
$$  \|uv\|_{X^{s,\frac{1}{2}+\epsilon}_{\pm}} \lesssim \| u\|_{X^{r+1,\frac{1}{2}+\epsilon}_{\pm}} \|v\|_{X^{s,\frac{1}{2}+\epsilon}_{\pm}} \, . $$
We start with the elementary estimate
$$|(\tau_1 + \tau_2)\mp |\xi_1+\xi_2|| \le |\tau_1 \mp |\xi_1|| + |\tau_2 \mp |\xi_2|| + |\xi_1| + |\xi_2| - |\xi_1+\xi_2| \, . $$
Assume now w.l.o.g. $|\xi_2|\ge|\xi_1|$. We have 
$$|\xi_1|+|\xi_2|-|\xi_1+\xi_2| \le |\xi_1|+|\xi_2| + |\xi_1| - |\xi_2| =  2|\xi_1| \, ,$$
so that $$|(\tau_1 + \tau_2)\mp |\xi_1+\xi_2||  \le |\tau_1 \mp\xi_1| + |\tau_2 \mp |\xi_2|| + 2\min(|\xi_1|,|\xi_2|) \, . $$
Using Fourier transforms by standard arguments it thus suffices to show the following three estimates:
\begin{align*}
\|uv\|_{X_{\pm}^{s,0}} & \lesssim \|u\|_{X^{r+1,0}_{\pm}} \|v\|_{X^{s,\frac{1}{2}+\epsilon}_{\pm}} \\
\|uv\|_{X_{\pm}^{s,0}} & \lesssim \|u\|_{X^{r+1,\frac{1}{2}+\epsilon}_{\pm}} \|v\|_{X^{s,0}_{\pm}} \\
\|uv\|_{X_{\pm}^{s,0}} & \lesssim \|u\|_{X^{r+\frac{1}{2}-\epsilon,\frac{1}{2}+\epsilon}_{\pm}} \|v\|_{X^{s,\frac{1}{2}+\epsilon}_{\pm}} 
\end{align*}
The first and second estimate easily follow from SML (\ref{SML}), whereas the last one is implied by Prop. \ref{Prop.2} and Prop. \ref{Prop.1.2} with the parameters $s_0=-s$ , $s_1= r+\half-\epsilon$ , $s_2=s$ , so that $s_0+s_1+s_2 > \frac{n-1}{2}$ and $s_1+s_2 = s+r+\half-\epsilon > n - \frac{5}{4} > \half$ . 
\end{proof}

\end{document}